\newtheorem{thm}{Theorem}
\newtheorem{cor}{Corrolarry}
\newtheorem{lem}{Lemma}
\newtheorem{pro}{Proposition}
\newtheorem{rem}{Remark}
\numberwithin{equation}{section} \numberwithin{lem}{section}
\numberwithin{thm}{section} \numberwithin{cor}{section}
\numberwithin{pro}{section} \numberwithin{rem}{section}
\begin{document}
\title[Irregular boundary layer behavior in a Boussinesq fluid]{Analysis of an irregular boundary layer behavior for the steady state flow of a Boussinesq fluid}


\author{Christos Sourdis}
\subjclass{Primary: 34E10, 34E20; Secondary: 34E13.}
 \keywords{boundary layer, singular perturbation, Painlev\'{e} transcendent, Boussinesq
fluid, Lazer-McKenna conjecture}
\address{Department of Mathematics and Applied Mathematics, University of
Crete, GR--714 09 Heraklion, Crete, Greece.}
\email{csourdis@tem.uoc.gr} \maketitle

\dedicatory{\centerline{\emph{Dedicated to the memory of Professor
Paul C. Fife}}}
\begin{abstract}
Using a perturbation approach, we make rigorous the formal
boundary layer asymptotic analysis of Turcotte, Spence and Bau
from the early eighties for the vertical flow of an internally
heated Boussinesq fluid in a vertical channel with viscous
dissipation and pressure work. A key point in our proof is to
establish the non-degeneracy of a special solution of the
Painlev\'{e}-I transcendent. To this end, we relate this problem
to recent studies for the ground states of the focusing nonlinear
Schr\"{o}dinger equation  in an annulus.  We also relate our
result to a particular case of the well known Lazer-McKenna
conjecture from nonlinear analysis.
\end{abstract}

\section{Introduction}
\subsection{The problem}\label{secproblem}

In \cite{turcotte},  Turcotte, Spence and  Bau considered the
vertical flow of an internally heated Boussinesq fluid in a
vertical channel with viscous dissipation and pressure work.
Starting from the basic equations for conservation of mass,
momentum and energy in a compressible fluid, and after making
various appropriate assumptions, they were led to the study of the
following boundary value problem for the steady state flow:
\begin{equation}\label{eqEq}
\left\{
\begin{array}{ll}
  2u''=u^2-A(1-x^2), & x\in (-1,1), \\
    &   \\
  u(-1)=u(1)=0,  &
\end{array}
\right.
\end{equation}
where $A\geq 0$ is a parameter. More specifically, the parameter
$A$ represents the (non-dimensional) heat addition, $u$ is the
velocity, $x$ is the scaled position  and $[-1,1]$ is the
horizontal cross section of the vertical channel (we refer to
\cite{turcotte} for more details).

\subsection{Formal asymptotic analysis as $A\to \infty$}
A formal asymptotic analysis carried out in \cite{turcotte}
predicts the existence of solutions to (\ref{eqEq}) which, as
$A\to \infty$, behave roughly in the following way. They converge
uniformly to $\sqrt{A(1-x^2)}$ over fixed compacts of $(-1,1)$;
they converge (in some sense) to
\begin{equation}\label{eqh1}
(2A)^\frac{2}{5}Y\left((2A)^\frac{1}{5}(x+1)\right)\ \textrm{and}\
 (2A)^\frac{2}{5}Z\left((2A)^\frac{1}{5}(1-x)\right)\
\end{equation}
near $x=-1$ and $x=1$ respectively, where $Y$ and $Z$ should
satisfy the following boundary value problem:
\begin{equation}\label{eqpainleve}
\left\{\begin{array}{ll}
        2y''=y^2-s, & s>0, \\
         &  \\
        y(0)=0, & y-s^\frac{1}{2}\to 0\ \textrm{as}\ s\to \infty.
      \end{array}
 \right.
\end{equation}

To convince the skeptical reader, let us note that, letting
\[
\varepsilon=\sqrt{\frac{2}{A}}\ \textrm{and}\
v=\frac{1}{\sqrt{A}}u,
\]
problem (\ref{eqEq}) is equivalent to the singular perturbation
problem:
\begin{equation}\label{eqEqSing}
\left\{
\begin{array}{ll}
  \varepsilon^2 v''=v^2- (1-x^2), & x\in (-1,1), \\
    &   \\
  v(-1)=v(1)=0,  &
\end{array}
\right.
\end{equation}
to which one can apply standard, but non-rigorous, matching
asymptotic techniques (see for instance \cite{miller}). In these
terms, $\sqrt{1-x^2}$ serves as an outer solution which, however,
has an \emph{irregular boundary layer}, thus creating the need for
the inner solutions in (\ref{eqh1}).

The familiar reader may have already observed that, after a simple
normalization, the differential equation in (\ref{eqpainleve}) is
non other than the Painlev\'{e}-I transcendent (see for example
\cite[Ch. 5]{fokas}). Despite of this fact, the study of the limit
problem (\ref{eqpainleve}) is nontrivial and, in fact, has quite a
history (see Proposition \ref{proYpm} herein for more details).
Combining the results of \cite{hastingsTroy,holmespainleve}, we
know that problem (\ref{eqpainleve}) has exactly two solutions:
$Y_+$ which is strictly increasing; $Y_-$ which has negative slope
at the origin and exactly one local minimum.

Actually, analogous formal considerations leave open the
possibility of existence of solutions converging to
$-\sqrt{A(1-x^2)}$, in some sense, as $A\to \infty$, but this case
lies beyond the scope of the present article.

\subsection{Rigorous known results}\label{subknownYo}
To the best of our knowledge, there are no studies of the problem
(\ref{eqEq}) that link it rigorously to the limit problem
(\ref{eqpainleve}). On the other hand, we were truly surprised
when we realized the striking similarities that the former problem
shares with a class of extensively studied superlinear elliptic
problems of Ambrosetti-Prodi type and the  famous Lazer-McKenna
conjecture that accompanies them. Interestingly enough, however,
both problems date  to the early 1980's. In particular, for the
simplified problem:
\begin{equation}\label{eqlazer}
\left\{\begin{array}{ll}
  -\Delta u=|u|^p-A\varphi_1(x) & \textrm{in}\ \Omega, \\
    &   \\
  u=0 & \textrm{on}\ \partial \Omega, \\
\end{array} \right.
\end{equation}
where $\Omega$ is a smooth, bounded domain of $\mathbb{R}^N$,
$p\in \left(1,\frac{N+2}{N-2} \right)$ if $N\geq 3$, $p\in
(1,\infty)$ if $N=1,2$, and $\varphi_1>0$ is the principal
eigenfunction of $-\Delta$ in $\Omega$ with Dirichlet boundary
conditions, the Lazer-McKenna conjecture asserts roughly that the
number of solutions diverges as $A\to \infty$ (see
\cite{dancer-lazer,danceryanCrtitic} and the references therein
for more details). Remarkably, the same was also conjectured in
\cite{turcotte} for problem (\ref{eqEq}) and was subsequently
verified by Hastings and McLeod in \cite{hastingsTroy} via a
shooting argument. Let us point out that solutions to
(\ref{eqlazer}) should also develop an irregular boundary layer,
as $A\to \infty$, since the gradient of $\varphi_1$ on $\partial
\Omega$ is nonzero (by Hopf's boundary point lemma). In
\cite{dancer-lazer}, Dancer and Yan proved the Lazer-McKenna
conjecture for (\ref{eqlazer}) by constructing solutions with an
arbitrary number of sharp downward spikes, located near the
maximum points of $\varphi_1$ and superimposed on a positive
minimizing solution, provided that $A$ is sufficiently large. They
also studied the asymptotic behavior, as $A\to \infty$, of the
mountain pass solutions to (\ref{eqlazer}) and showed that they
have a small steep peak near the boundary (combined with the
irregular boundary layer). In connection with this, let us note
that the aforementioned increasing solution $Y_+$ of
(\ref{eqpainleve}) is a minimizer of the natural associated
energy, while the other solution $Y_-$, which has a negative peak,
is a mountain pass. Even though the irregular boundary layer of
the problem (\ref{eqlazer}) was treated mostly as a tangential
issue in \cite{dancer-lazer}, the author still had to study the
elliptic analog of (\ref{eqpainleve}) (with exponent $p$ in the
nonlinearity).

It follows readily from the analysis in \cite{dancer-lazer}, which
was variational in nature, that problem (\ref{eqEq}) has two even
solutions $u_\pm$ such that
\begin{equation}\label{eqDYout}
u_{\pm}(x)=\sqrt{A(1-x^2)}-(1-x^2)^{-2}+A^{-\frac{1}{2}}\mathcal{O}\left(1
\right), \end{equation} uniformly over fixed compacts of $(-1,1)$,
as $A\to \infty$ (here, and throughout this paper, Landau's symbol
$\mathcal{O}(1)$ denotes a quantity which is bounded independently
of large $A$);
\begin{equation}\label{eqDYin}
(2A)^{-\frac{2}{5}}u_{\pm}\left(-1+(2A)^{-\frac{1}{5}}s \right)\to
Y_\pm (s)\ \textrm{in}\ C_{loc}[0,\infty),\ \textrm{as}\ A\to
\infty.
\end{equation}
Actually, only even solutions were considered in \cite{turcotte}.
As may be expected, the minus case in the above result is
considerably harder to establish and, for this purpose, the
authors had to adapt some ideas from \cite{delPinoIndi}.  Let us
emphasize that $u_-$ has to be constructed as a mountain pass in
the class of even functions. Indeed, the associated linearization
on $u_-$, for large $A$, has similarities with a semiclassical
Schr\"{o}dinger operator with a double-well potential; thus, it is
expected to have at least two unstable eigenvalues in the
non-symmetric class (by the instability of $u_-$ in the symmetric
class and tunnelling phenomena, see \cite{hislop} and Remark
\ref{remTunel} below), contradicting \cite{hofer} should $u_-$ was
a mountain pass solution in the non-symmetric class. The mountain
pass solutions, in the general class, are expected to have the
(re-scaled) profile of $Y_-$ at one boundary point and that of
$Y_+$ at the other, for large $A$. However, this does not seem to
follow directly from the analysis in \cite{dancer-lazer}. Let us
also point out that the variational approach, used for showing the
above, does not require any knowledge of the non-degeneracy of the
solutions $Y_\pm$ of (\ref{eqpainleve}), that is the absence of
bounded elements in the kernel of the associated linearizations.
On the other hand, it is essentially the non-degeneracy of the
corresponding $Y_+$ that allowed the authors of
\cite{dancer-lazer} to add sharp downward spikes on top of the
corresponding minimal solution $u_+$ by means of a finite
dimensional variational reduction procedure.

\subsection{The main result}
In this article, using a perturbation argument, we will give
optimal estimates for the convergence in (\ref{eqDYin}) and also
provide the missing estimates in the intermediate zones that are
not covered by (\ref{eqDYout}) and (\ref{eqDYin}). In the process,
we will prove the non-degeneracy of the ``blow-up'' profile $Y_-$,
and at the same time provide a new proof of the fact that
(\ref{eqpainleve}) has only $Y_+$ and $Y_-$ as solutions, which
was originally shown in \cite{hastingsTroy}.

The following is our main result.
\begin{thm}\label{thmex}
Let $Y$ and $Z$ be either one of the two solutions $Y_+$ and $Y_-$
of (\ref{eqpainleve}). There exists a solution $u=u_{YZ}$ of
(\ref{eqEq}) such that
\[\left\{\begin{array}{ll}
    u=(2A)^\frac{2}{5}Y\left((2A)^\frac{1}{5}(x+1)\right)+\mathcal{O}(A^\frac{2}{5})(x+1), & 0\leq x+1\leq (2A)^{-\frac{1}{5}}D, \\
     &  \\
    u=(2A)^\frac{2}{5}Y\left((2A)^\frac{1}{5}(x+1)\right)+\mathcal{O}(A^\frac{1}{2})(x+1)^\frac{3}{2}, & (2A)^{-\frac{1}{5}}D\leq x+1\leq \delta,
  \end{array}\right.
\]
\[\left\{\begin{array}{ll}
    u=(2A)^\frac{2}{5}Z\left((2A)^\frac{1}{5}(1-x)\right)+\mathcal{O}(A^\frac{2}{5})(1-x), & 0\leq 1-x\leq (2A)^{-\frac{1}{5}}D, \\
     &  \\
    u=(2A)^\frac{2}{5}Z\left((2A)^\frac{1}{5}(1-x)\right)+\mathcal{O}(A^\frac{1}{2})(1-x)^\frac{3}{2}, & (2A)^{-\frac{1}{5}}D\leq 1-x\leq \delta,
  \end{array}\right.
\]
and
\[
u=\sqrt{A(1-x^2)}+\mathcal{O}(1)(1-x^2)^{-2},\ \ x\in
\left[-1+(2A)^{-\frac{1}{5}}D,1-(2A)^{-\frac{1}{5}}D \right],
\]
for some constants $0<\delta\ll D$, uniformly as $A\to \infty$
(for the above notation, see Subsection \ref{secnotation} below).

Moreover, the following a-priori estimate holds for the associated
linearized operator: There exist constants $A_1, C>0$ such that if
$\varphi\in C^2[-1,1]$ and $f\in C[-1,1]$ satisfy
\[
\left\{
\begin{array}{cc}
  -\varphi''+u_{YZ}\varphi=f, & x\in (-1,1), \\
    &   \\
  \varphi(-1)=0=\varphi(1), &   \\
\end{array}
\right.
\]
for $A\in (A_1,\infty)$, then
\[
\|\varphi\|_{L^\infty(-1,1)}\leq C
A^{-\frac{2}{5}}\|f\|_{L^\infty(-1,1)}.
\]
\end{thm}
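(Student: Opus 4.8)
The plan is to construct a sufficiently accurate global approximate solution $\bar u$ by matched asymptotics and then to recover an exact solution, together with the a priori estimate, by a contraction argument whose linear ingredient is precisely the estimate stated in the theorem. Near $x=-1$ the natural candidate is the inner profile $U^-(x):=(2A)^{2/5}Y\big((2A)^{1/5}(x+1)\big)$. Inserting $y=\sqrt s+w$ into $2y''=y^2-s$ and iterating gives $Y(s)=\sqrt s-\tfrac14 s^{-2}+\mathcal O(s^{-9/2})$ as $s\to+\infty$, hence $U^-(x)=\sqrt{2A}\,\sqrt{x+1}-\tfrac14(x+1)^{-2}+\cdots$; this matches, to its first two orders, the outer expansion $u_{\mathrm{out}}(x):=\sqrt{A(1-x^2)}-(1-x^2)^{-2}+\cdots$ obtained by putting $u=\sqrt{A(1-x^2)}+\psi$ into \eqref{eqEq} (the leading correction being $\psi=\big(\sqrt{A(1-x^2)}\big)''/\sqrt{A(1-x^2)}=-(1-x^2)^{-2}$), since $1-x^2=2(x+1)+\mathcal O\big((x+1)^2\big)$ near $x=-1$. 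Using the analogous profile $U^+$ built from $Z$ near $x=1$, one glues $U^\pm$ and $u_{\mathrm{out}}$ with smooth cut-offs supported in the overlap windows $(2A)^{-1/5}D\le 1\mp x\le\delta$ to obtain $\bar u\in C^2[-1,1]$ with $\bar u(\pm1)=0$; using $A(1-x^2)=2A(1\pm x)-A(1\pm x)^2$ and the Painlev\'{e} equation, the residual $R:=2\bar u''-\bar u^2+A(1-x^2)$ equals $-A(1\pm x)^2$ in the respective layers and is $\mathcal O\big((1-x^2)^{-4}\big)$ in the interior, the cut-off terms being negligible since there $U^\pm$ and $u_{\mathrm{out}}$ already agree to high order.

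The core is the uniform-in-$A$ invertibility of the Dirichlet linearization $L_u\varphi:=-\varphi''+u\varphi$ for $u$ in a suitable neighbourhood of $\bar u$ --- in particular for $u=\bar u$, needed for the contraction, and for $u=u_{YZ}$, which is the assertion of the theorem. I would establish $\|\varphi\|_{L^\infty(-1,1)}\le CA^{-2/5}\|f\|_{L^\infty(-1,1)}$ by rescaling and compactness. On the interior interval $I_A:=[-1+(2A)^{-1/5}D,\,1-(2A)^{-1/5}D]$ one has $u\ge cA^{2/5}$, so the constant barrier $\psi\equiv c^{-1}A^{-2/5}\|f\|_\infty+\max_{\partial I_A}|\varphi|$ and the maximum principle yield $\|\varphi\|_{L^\infty(I_A)}\le CA^{-2/5}\|f\|_\infty+\max_{\partial I_A}|\varphi|$. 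In a layer, say near $x=-1$, set $s=(2A)^{1/5}(x+1)$, $\tilde\varphi(s)=\varphi(x)$; then $L_u\varphi=f$ becomes $-\tilde\varphi''+\tilde u(s)\tilde\varphi=(2A)^{-2/5}\tilde f(s)$ on $0<s<(2A)^{1/5}\delta$, with $\tilde u:=(2A)^{-2/5}u\to Y$ uniformly on compacts, $\tilde u(s)\ge c\sqrt s$ for $s$ large, and $\tilde\varphi(0)=0$. Suppose the bound fails: there are $A_n\to\infty$ and $\varphi_n,f_n$ solving $-\varphi_n''+u_n\varphi_n=f_n$, with $u_n$ either $\bar u$ or $u_{YZ}$ (in both cases $(2A_n)^{-2/5}u_n\to Y$ on compacts in the layer), $\varphi_n(\pm1)=0$, $\|\varphi_n\|_\infty=1$ and $A_n^{2/5}\|f_n\|_\infty\to0$. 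The interior barrier forces the norm to be attained, up to $o(1)$, inside one of the two layers; a barrier near the outer end $s\sim(2A_n)^{1/5}\delta$, using $\tilde u_n(s)\ge c\sqrt s$, excludes concentration as $s\to\infty$; hence $\tilde\varphi_n\to\tilde\varphi_\infty$ in $C^1_{\mathrm{loc}}[0,\infty)$ with $\tilde\varphi_\infty$ bounded, $\tilde\varphi_\infty\not\equiv0$, $-\tilde\varphi_\infty''+Y\tilde\varphi_\infty=0$ on $(0,\infty)$ and $\tilde\varphi_\infty(0)=0$. This contradicts the non-degeneracy of $Y$ (for $Y_-$ this is the key new spectral fact, obtained through the link with ground states of the focusing NLS equation on an annulus; see Proposition \ref{proYpm}). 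It is exactly here --- and only here --- that the non-degeneracy of $Y_-$ is indispensable: the rescaled operator is then indefinite, being a mountain-pass linearization, so no maximum principle is available inside that layer and the spectral fact must carry the argument.

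With $\bar u$ and the uniform invertibility of $L_{\bar u}$ in hand, writing $u=\bar u+\varphi$ turns \eqref{eqEq} into $L_{\bar u}\varphi=\tfrac12R-\tfrac12\varphi^2$, $\varphi(\pm1)=0$, i.e.\ into the fixed-point equation $\varphi=\mathcal T\varphi:=L_{\bar u}^{-1}\big(\tfrac12R-\tfrac12\varphi^2\big)$. Since $R$ is (essentially) supported in the layers and there $L_{\bar u}^{-1}$ gains the factor $(2A)^{-2/5}$ while dividing by $\tilde u$, a short computation gives $L_{\bar u}^{-1}R=\mathcal O(A^{2/5})(1\pm x)$ for $1\pm x\le(2A)^{-1/5}D$ and $\mathcal O(A^{1/2})(1\pm x)^{3/2}$ for $(2A)^{-1/5}D\le 1\pm x\le\delta$ --- exactly the weights in the statement --- while in the interior $L_{\bar u}^{-1}R=\mathcal O(1)(1-x^2)^{-2}$. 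Running the contraction in the corresponding weighted $C^0$-ball, with $\delta$ small and $D$ large (which renders $\varphi^2$ of genuinely higher order), produces $u_{YZ}=\bar u+\varphi$ satisfying all the claimed estimates; and since $u_{YZ}$ lies in the neighbourhood of $\bar u$ treated above, the a priori estimate applies to $L_{u_{YZ}}$ verbatim, which is the final assertion. The main obstacle throughout is the uniformity in $A$ of the linear estimate: gluing the barrier-based interior bound to the layer bound while ruling out loss of compactness both at the interface $1\pm x\sim(2A)^{-1/5}D$ and at the outer end $1\pm x\sim\delta$ of the boundary layers --- the latter resting on the growth $Y(s)\sim\sqrt s$, the former being precisely where the non-degeneracy of $Y_\pm$, and not any coercivity, does the work.
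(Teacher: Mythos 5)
Your overall architecture coincides with the paper's: rescaled Painlev\'e-I inner profiles matched to the outer square root, a uniform a priori bound for the Dirichlet linearization proved by rescaling, compactness and the non-degeneracy of $Y_\pm$ (with the interior handled through positivity of the potential and the maximum principle), and then a fixed point; your linear step is essentially Proposition \ref{prouniform}. The first genuine gap is in the construction of $\bar u$ and the residual bookkeeping. In your overlap window $(2A)^{-1/5}D\le 1\mp x\le\delta$ the inner and outer profiles do \emph{not} ``agree to high order'': their difference is of size $A^{1/2}(1\mp x)^{3/2}$ (precisely the error the theorem tolerates there), which at the outer end is $A^{1/2}\delta^{3/2}$. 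Writing $\bar u=u_{\mathrm{out}}+\chi(U^{-}-u_{\mathrm{out}})$, the transition terms carrying no derivative of $\chi$, namely $-2u_{\mathrm{out}}\chi(U^{-}-u_{\mathrm{out}})-\chi^2(U^{-}-u_{\mathrm{out}})^2$, are of order $A(1\mp x)^2$, hence of order $A$ if the cut-off varies at the fixed scale $\delta$; they are not negligible and are simply missing from your residual count. Moreover, even if you confine the cut-off to the inner edge $1\mp x\sim(2A)^{-1/5}$, your additive outer $\sqrt{A(1-x^2)}-(1-x^2)^{-2}$ has residual $2\psi''-\psi^2=\mathcal{O}\big((1-x^2)^{-4}\big)$, which reaches $A^{4/5}$ at the matching scale; with the only available linear gain $A^{-2/5}$, a sup-norm fixed point then has an $\mathcal{O}(1)$ Lipschitz constant and does not close. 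This is exactly why the paper works with the modified outer function $\tilde u_{out}$ of (\ref{equoutmod}), which absorbs the inner correction inside the square root so that $u_{in}-\tilde u_{out}=\mathcal{O}(A^{1/2})(1-x^2)^{3/2}$ (Proposition \ref{prouout}) and the global residual obeys $\|\mathcal{E}\|_\infty\le CA^{3/5}$ (Corollary \ref{coruapremainder}), with the cut-off placed at $1\mp x\sim\delta^{-1}(2A)^{-1/5}$ where the mismatch is only $\mathcal{O}(A^{1/5})$.

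The second gap is the weighted contraction. The only linear estimate you establish is the unweighted one, $\|L_{\bar u}^{-1}f\|_\infty\le CA^{-2/5}\|f\|_\infty$; the bounds you assert for $L_{\bar u}^{-1}R$ with the theorem's weights amount to inverting $L_{\bar u}$ pointwise by ``dividing by $\tilde u$'', which is unjustified: the inverse is nonlocal and, in a $Y_-$ layer, $L_{\bar u}$ is indefinite, so no barrier or positivity argument is available there (as you yourself stress). Hence the mapping properties needed to run the fixed point in your weighted ball are unproved, and the three displayed estimates of the theorem do not follow from your scheme. The paper proceeds differently: a plain sup-norm contraction in a ball of radius $MA^{1/5}$ (Proposition \ref{proexistPhi}), possible because $\|\mathcal{E}\|_\infty\le CA^{3/5}$, followed by a posteriori upgrades of $\phi=u-u_{ap}$: a rescaled $C^2$ bound plus interpolation gives $|\phi'|\le C_LA^{2/5}$ near the endpoints (Lemma \ref{lemphi'}), whence $|\phi|=\mathcal{O}(A^{2/5})(1\mp x)$ there since $\phi(\pm1)=0$; and explicit sub/supersolutions $\mp K(1-x^2)^{-1}$, legitimate where $u_{ap}\ge cA^{1/2}\sqrt{1-x^2}$, give $|\phi|\le C(1-x^2)^{-1}$ in the interior (Lemma \ref{lemdecay}); these, with Proposition \ref{prouout} and Corollary \ref{coruap-sqrt}, yield the stated expansions. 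You need either to reproduce such post-fixed-point arguments or to build an actual weighted linear theory. Finally, transferring the a priori estimate to $u_{YZ}$ is not quite ``verbatim'': Remark \ref{remlinearnearby} requires $A^{-2/5}\|u_{YZ}-\bar u\|_\infty\to0$, which holds for the paper's $\phi=\mathcal{O}(A^{1/5})$ but not automatically for a difference allowed to reach $A^{1/2}\delta^{3/2}$ in your intermediate zone; there you must additionally use that the positive potential dominates the perturbation.
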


Although we do not show it, the above estimates are optimal as can
be easily verified by simple scaling arguments.

In the case of even solutions, it turns out that $u_{Y_+Y_+}$ is
asymptotically stable while $u_{Y_-Y_-}$ is unstable with Morse
index equal to two (see Remarks  \ref{remTunel},
\ref{remlinearnearby} and \ref{remeven} below). In the
nonsymmetric case, we can tell that $u_{Y_-Y_+}$ and $u_{Y_+Y_-}$
have Morse index one (see Remark \ref{remunbalanced} below).

We expect that the above a-priori estimate for the linearized
operator can allow to extend the usual variational reduction
procedure, similarly to \cite{dancer-lazer}, in order to construct
new solutions to (\ref{eqEq}), having an arbitrary number of
downward spikes near the origin (each of scale $A^{-\frac{1}{2}}$
and at an $\mathcal{O}\left((\ln A)A^{-\frac{1}{2}} \right)$
distance from the others) that are superimposed on the profile of
$u_{YZ}$, for large $A>0$.

The proof of Theorem \ref{thmex} carries over directly to the case
of the singular perturbation problem
\[
\left\{\begin{array}{cc}
  \varepsilon^2 u''=F(u,x), & x\in (a,b), \\
    &   \\
  u(a)=0=u(b), &   \\
\end{array} \right.
\]
provided that the following assumptions are met: $F\in
C^3\left(\mathbb{R}\times [a,b] \right)$ and there exists a
$u_0\in C^2(a,b)\cap C[a,b]$ such that $u_0(a)=0=u_0(b)$,
\[
F\left(u_0(x),x \right)=0,\ x\in [a,b], \ \ F_u\left(u_0(x),x
\right)>0,\ x\in (a,b),
\]
\[
F_u=0,\ F_x<0,\ F_{uu}>0,\ F_{ux}=0\ \textrm{at}\ (0,a),
\]
\[
F_u=0,\ F_x>0,\ F_{uu}>0,\ F_{ux}=0\ \textrm{at}\ (0,b).
\]

\subsection{Method of proof}
Our strategy is to apply a perturbation argument that has been
used in many papers in the last years. This type of argument
consists of three main steps: Firstly, one constructs a
sufficiently good approximate solution to the problem, then
studies the invertibility properties of the associated
linearization about this approximation, and finally captures a
true solution that is close, in some sense, to the approximate one
by some type of fixed point argument. This approach, however,
relies heavily on the good understanding of the corresponding
limit problems, something which is not the case here since the
non-degeneracy of $Y_-$ does not seem to be known. In addition,
the solutions that we expect to find are not localized in the
conventional sense, as they should develop irregular boundary
layers. In this regard, let us point out that an extra difficulty
is that the convergence of $Y_\pm$ to the square root profile is
algebraically slow (see (\ref{eqY-sqrt}) below).

We are able to prove the non-degeneracy of $Y_-$ by reducing
(\ref{eqpainleve}) to the ground state problem for a nonlinear
Schr\"{o}dinger equation in the half-line with zero boundary
conditions (see (\ref{eqeta}) below), and take advantage of the
many studies that have been conducted on uniqueness and
non-degeneracy issues for the latter problem (see
\cite{byeonOshita,felmerUniqueness,tanaka,yotsutani}). In fact,
this also allows us to give a new proof of the uniqueness of
$Y_{\pm}$, as was originally conjectured in \cite{holmespainleve}
and proven by completely different techniques in
\cite{hastingsTroy}. Armed with the knowledge of the
non-degeneracy of the blow-up profiles $Y_\pm$, we can deal with
the difficulties related to the irregular boundary layer behavior
by adapting the perturbative approach that was developed in the
recent papers \cite{karaliGS,sourdis-fife}, where the
corresponding blow-up problem featured the Painlev\'{e}-II
transcendent.

Let us point out that, in contrast to the problems in the
aforementioned references, the instability of $Y_-$ (recall our
discussion in Subsection \ref{subknownYo}) suggests that the
 solutions of (\ref{eqEq}) with this blow-up profile in
one of the boundaries should also be unstable. Therefore, the well
known method of upper and lower solutions (barriers), see for
example \cite{sattinger}, should not be applicable to capture such
 solutions of (\ref{eqEq}).

\subsection{Relations with geometric singular perturbation theory}
The ordinary differential equation in (\ref{eqEqSing}) can be
written as a three-dimensional, slow-fast system (see \cite{jo}),
having a one-dimensional slow manifold which undergoes saddle-node
bifurcations at the values $\pm 1$ of the slow variable $x$. In
light of the non-degeneracy of $Y_-$ that we will prove, it seems
plausible that our main result can also be proven by the blow-up
approach to geometric singular perturbation theory (see
\cite{schecter-sourdis} for a related problem with one turning
point that involves the Painlev\'{e}-II transcendent).

\subsection{Extensions} We expect that an analogous result to
Theorem \ref{thmex} holds for positive solutions to
(\ref{eqlazer}) (at least for $p\geq 2$), having  the profile of
the corresponding to $Y_+$ one-dimensional stable solution  of the
blow-up problem
\[
\left\{\begin{array}{ll}
  y_{ss}+\Delta_{\mathbb{R}^{N-1}}=|y|^p-s, & (s,\theta)\in (0,\infty)\times \mathbb{R}^{N-1}, \\
    &   \\
  y(0,\theta)=0,\ \ y(s,\theta)-s^\frac{1}{p}\to 0  & \textrm{as}\ \ s\to \infty,\ \ \textrm{uniformly\ in} \ \theta\in \mathbb{R}^{N-1}.  \\
\end{array} \right.
\]
orthogonal to the boundary.

In view of the preceding discussion and the results of the current
article, the only other solution of the above blow-up problem for
which we have some non-degeneracy information is $Y_-(s)$ for the
case $p=2$. However, note that this solution has infinite Morse
index as a solution of the above problem, and thus a corresponding
perturbation result should involve resonance phenomena (see
\cite{delpinocpam,malchiodiCPAM}, and especially
\cite{karalisourdisresonance} where solutions exhibiting similar
irregular layered behavior were studied).

\subsection{Outline of the paper}
In Section \ref{secuap}, we will construct sufficiently good
approximate solutions to (\ref{eqEq}). This is the main section of
the paper and it is where we will prove the non-degeneracy of
$Y_-$ (the full details will be postponed to  Appendix
\ref{appenFelmer}). In Section \ref{seclinear}, we will study the
invertibility properties of the linearization of (\ref{eqEq}) on
the constructed approximate solutions, relying heavily on the
non-degeneracy of $Y_\pm$. In Section \ref{secexist}, we will use
the obtained linear estimates to perturb the approximate solutions
to genuine ones, and also obtain related estimates for their
difference by various comparison arguments. Finally, in Section
\ref{secmainResult}, we will combine everything together to prove
Theorem \ref{thmex}. We will close the paper with an appendix,
providing the full details of the proof of the non-degeneracy of
$Y_-$.

\subsection{Notation}\label{secnotation} In the sequel, we will
often suppress the obvious dependence on $A$ of various functions
and quantities. Furthermore, by $c/C$ we will denote small/large
generic constants, independent of $A$, whose value will change
from line to line. The value of $A$ will constantly increase so
that all previous relations hold. The Landau symbol
$\mathcal{O}(1), \ A\to \infty$, will denote quantities that
remain uniformly bounded  as $A \to \infty$, whereas $o(1)$ will
denote quantities that approach zero as $A\to \infty$.

\section{Construction of an approximate solution
$u_{ap}$}\label{secuap}

In this section, we will construct  sufficiently good approximate
solutions to the problem (\ref{eqEq}) with the same type of
behavior as the solutions that we are looking for.
\subsection{The inner (boundary layer) solution $u_{in}$}

In this subsection, motivated from the aforementioned formal
analysis in \cite{turcotte}, we will use the solutions $Y_\pm$ of
the blow-up problem (\ref{eqpainleve}) to construct approximate
solutions to (\ref{eqEq}) which, however, are effective only near
the boundary of the interval.

The properties of the blow-up profiles $Y_\pm$ that we will need
for the purposes of this paper are contained in the following
proposition.

\begin{pro}\label{proYpm}
The boundary value problem (\ref{eqpainleve}) has exactly two
solutions $Y_+$ and $Y_-$. We have that $(Y_+)'>0$ in
$[0,\infty)$, while $(Y_-)'(0)<0$ and $Y_-$ has a unique minimum
at some point in $(0,\infty)$. Moreover, the solutions $Y_\pm$ are
non-degenerate, in the sense that there do not exist nontrivial
bounded smooth solutions of
\begin{equation}\label{eqnondegYpm}
\psi''-Y_\pm \psi=0\ \ \textrm{in}\ [0,\infty),\ \ \varphi(0)=0.
\end{equation}
\end{pro}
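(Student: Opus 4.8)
The plan is to translate the second-order ODE in (\ref{eqpainleve}) into a first-order planar system and exploit the structure of its phase plane, combined with an energy/Lyapunov function argument, to pin down existence, uniqueness, and the qualitative shapes of $Y_+$ and $Y_-$; then to reduce the non-degeneracy question to a uniqueness result for the ground state of a nonlinear Schr\"{o}dinger equation on the half-line, as the introduction promises. First I would observe that the equilibrium of $2y''=y^2-s$ at fixed $s$ sits at $y=\pm\sqrt{s}$, and that near $s=\infty$ the relevant decaying behavior of $y-\sqrt{s}$ is governed by linearizing: writing $y=\sqrt{s}+w$ gives $2w''\approx 2\sqrt{s}\,w$, whose decaying solution behaves like $\exp\!\big(-\tfrac{2}{5}(2s)^{1/2}\cdot\text{(const)}\big)$-type WKB decay, but since $s$ is the independent variable this is a turning-point/Airy-type analysis; the upshot is a one-parameter stable manifold of trajectories approaching $(\sqrt{s},\cdot)$ as $s\to\infty$. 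Shooting backward from this stable manifold, one tracks the value $y(0)$ as a function of the shooting parameter and shows (by monotonicity of the shooting map, or by a rotation-number/Sturm argument on the linearized flow) that exactly two trajectories hit $y(0)=0$: one that stays above any local structure and is increasing (that is $Y_+$), and one that must dip first, producing the negative slope at the origin and a single interior minimum (that is $Y_-$). The single-minimum claim follows because $y''$ can vanish only when $y^2=s$, i.e.\ on the curve $y=\pm\sqrt{s}$, and a careful sign analysis of where the trajectory crosses $y=\pm\sqrt{s}$ limits the number of critical points of $y$.

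For the non-degeneracy statement, the key step is the reduction announced in the introduction: differentiating the equation with respect to a scaling parameter, or directly manipulating (\ref{eqnondegYpm}), one relates bounded kernel elements of $\psi''-Y_\pm\psi=0$, $\psi(0)=0$, to the linearized operator about the ground state of the half-line problem (\ref{eqeta}) — roughly, after an appropriate change of variables turning the square-root outer behavior into exponential decay, $Y_\pm$ corresponds to a positive (for $Y_+$) solution $\eta$ of a problem of the form $\eta''-\eta+\eta^{q}=0$ on $(0,\infty)$ with $\eta(0)=0$, and (\ref{eqnondegYpm}) becomes the statement that the linearization $L=-\partial_{ss}+1-q\eta^{q-1}$ has trivial kernel on $H^1_0(0,\infty)$. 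Non-degeneracy of such ground states (and hence the conclusion) then follows from the uniqueness/non-degeneracy theory in \cite{byeonOshita,felmerUniqueness,tanaka,yotsutani}; for the sign-changing profile $Y_-$ one must instead exploit the oscillation/Sturm structure directly, counting zeros of a putative kernel element against zeros of $Y_-'$ (which solves the same linear equation up to the inhomogeneity coming from the $-s$ term) to force $\psi\equiv 0$.

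The main obstacle, I expect, is twofold. First, the turning-point analysis of the decay condition $y-s^{1/2}\to 0$ as $s\to\infty$: because $s$ is the independent variable, the ``stable manifold'' of the equilibrium branch $y=\sqrt{s}$ is not a fixed subspace but an $s$-dependent object described by Airy-type asymptotics, and making the one-parameter family of admissible backward trajectories rigorous (including the uniqueness of the decaying solution and its smooth dependence on initial data) requires care — this is presumably where the connection to the Painlev\'{e}-I transcendent and the existing literature \cite{hastingsTroy,holmespainleve} is invoked rather than reproved. Second, the non-degeneracy of $Y_-$ is the genuinely new point: the sign change of $Y_-$ means the transformed problem is not a standard positive ground state, so the cited NLS uniqueness results do not apply verbatim, and one must either find the precise change of variables that still lands inside their hypotheses or run a self-contained Sturm-type argument. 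Accordingly, I would expect the bulk of the work — and the deferral to the appendix — to be concentrated on establishing that $\ker$ of the linearization about $Y_-$ within the bounded functions vanishing at $0$ is trivial.
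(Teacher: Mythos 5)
Your proposal does not contain the key idea on which the paper's proof rests, and the step you leave vague is exactly the one that matters. For the non-degeneracy you propose ``an appropriate change of variables turning the square-root outer behavior into exponential decay'' so that $Y_\pm$ becomes a positive ground state of $\eta''-\eta+\eta^{q}=0$ on $(0,\infty)$; no such change of variables is exhibited, and none is used in the paper. The actual reduction is different and more subtle: one does not transform $Y_\pm$ themselves, but takes the \emph{difference} $u=Y_+-\tilde Y$ between the (known, non-degenerate, unique increasing) solution $Y_+$ and any other solution $\tilde Y$. A one-line computation plus the maximum principle shows that $u$ solves the half-line ground-state problem (\ref{eqeta}), $2u''-2Y_+(s)u+u^2=0$, $u>0$, $u(0)=0$, $u\to 0$ at infinity, i.e.\ an NLS-type problem whose \emph{potential} is $Y_+(s)$ (unbounded, increasing), not a constant. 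Uniqueness and non-degeneracy for this problem are then obtained by adapting the Felmer--Mart\'{\i}nez--Tanaka theory (the auxiliary function is $U(s)=s^{3}Y_+'(s)>0$, so their case (i) applies; the unboundedness of the potential and $\nu=0$ require the modified argument of the Appendix, which uses the energy $E(s;u)$, the monotonicity of $u_1/u_2$, the perturbed functional $J_\delta$ and Hofer's bound $i(I,\varphi)\le 1$). The decisive observation is that the linearization of (\ref{eqeta}) at $\varphi=Y_+-Y_-$ is $\psi''-(Y_+-\varphi)\psi=\psi''-Y_-\psi=0$, i.e.\ exactly (\ref{eqnondegYpm}) for $Y_-$; hence non-degeneracy of the ground state of (\ref{eqeta}) \emph{is} the non-degeneracy of $Y_-$, and uniqueness for (\ref{eqeta}) simultaneously yields that (\ref{eqpainleve}) has only the two solutions $Y_\pm$. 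Your fallback for $Y_-$ (``counting zeros of a putative kernel element against zeros of $Y_-'$'') is not an argument: $Y_-'$ does not solve the homogeneous linearized equation (the $-s$ term produces a nonzero right-hand side), and since $Y_-$ is the unstable, mountain-pass-type profile, neither Sturm comparison with an explicit solution nor the maximum principle is available; this is precisely why the paper needed the differencing trick.

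On the remaining parts, your shooting/stable-manifold sketch for existence and for the shape of $Y_\pm$ is acceptable only insofar as it defers to \cite{holmespainleve} (as the paper itself does for existence and the monotonicity/unique-minimum properties), but your claim that ``exactly two trajectories hit $y(0)=0$'' by ``monotonicity of the shooting map, or by a rotation-number/Sturm argument'' is unsubstantiated --- that is the Hastings--Troy uniqueness theorem, whose original proof was long and partly numerical, and which the paper deliberately replaces by the ground-state uniqueness for (\ref{eqeta}) described above. As written, your proposal therefore leaves both the ``exactly two solutions'' statement and the non-degeneracy of $Y_-$ unproven.
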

\begin{proof}
Existence of two solutions, satisfying the monotonicity properties
described in the assertion of the proposition, has been
established by Holmes and Spence \cite{holmespainleve} by a
shooting argument (and in \cite{dancer-lazer}, via the method of
upper/ lower solutions and variational arguments, perhaps unaware
of \cite{holmespainleve}). The authors of \cite{holmespainleve}
also conjectured that these solutions were indeed the only ones.
Their conjecture was settled, to the affirmative, by Hastings and
Troy \cite{hastingsTroy}. However, their proof was, as we discover
(\emph{almost 25 years later!}), much more complicated than
necessary, and relied on some four decimal point numerical
calculations. A truly simple proof of the uniqueness result of
\cite{hastingsTroy}, which in the process implies the desired
non-degeneracy of solutions  can be given, based on a previous
remark of ours from \cite{karaliGS} (see Remark 35 therein), as
follows. It is easy to see that $Y_+$ is non-degenerate and the
unique increasing solution of (\ref{eqpainleve}) (see for example
\cite{dancer-lazer}). Let $\tilde{Y}$ be any other solution of
(\ref{eqpainleve}), and let $u=Y_+-\tilde{Y}$. By an easy
calculation, and the maximum principle, we find that $u$ has to be
a  solution of
\begin{equation}\label{eqeta}
2u''-2Y_+(s)u+u^2=0,\ s>0,\ u(s)>0,\ s>0,\ \ u(0)=0,\ u(s)\to 0\
\textrm{as}\ s\to \infty.
\end{equation}

It has been shown recently in \cite{felmerUniqueness} that the
general problem
\begin{equation}\label{eqfelmerGener}\left\{\begin{array}{l}
    u''+\frac{\nu}{s}u'-V(s)u+u^p=0,\ \ s>a, \\
     \\
   u(s)>0,\ s>a,\ \ u(a)=0,\ u(s)\to 0\ \textrm{as}\ s\to \infty,
  \end{array}\right.
\end{equation}
where $a>0$, $\nu\geq 0$, $p\in (1,\infty)$ and $V\in C^1\left(
[a,\infty)\right)$, has at most one solution provided that the
auxiliary function
\[
U(s)=V'(s)s^3+\beta V(s)s^2+(\beta-2)L,
\]
with
\[
\alpha=\frac{2\nu}{p+3},\ \ \beta=(p-1)\alpha, \ \textrm{and}\ \
L=\alpha(\nu-1-\alpha),
\]
satisfies
\[
\liminf_{s\to \infty}U(s)>0,
\]
and one of the following conditions:
\begin{description}
  \item[(i)] $U$ is positive in $(a,\infty)$,
  \item[(ii)] $U(a)<0$ and $U$ changes sign only once in
  $(a,\infty)$.
\end{description}
Moreover, if we further assume that $\nu>0$, it has been shown in
the same reference that the unique solution of (\ref{eqfelmerGener})
is non-degenerate (if such solution exists).

We would like to adapt the proof of the aforementioned result in
order to establish that the solution $Y_+-Y_-$ of (\ref{eqeta}) is
unique and non-degenerate. Comparing with (\ref{eqfelmerGener}), in
the problem at hand (\ref{eqeta})  we have $a=0$, $p=2$, $\nu=0$,
$\alpha=0$, $V(s)=Y_+(s)$, $s>0$, and $U(s)=s^3Y_+'(s)$, $s>0$. We
note that, by scaling, the result of \cite{felmerUniqueness}
continues to hold when a positive constant multiplies the power
nonlinearity. Observe that the corresponding case (i) above holds.
However, our potential $V=Y_+$ loses its positivity at $s=0$ and
also becomes unbounded as $s\to \infty$. On top of that, in our case
$\nu=0$ and not positive as required in \cite{felmerUniqueness} for
showing the non-degeneracy of the solution. Nevertheless, as we will
see, the proof of \cite{felmerUniqueness} can be easily  adapted to
establish uniqueness for the problem (\ref{eqeta}), and with some
care the same can be done for showing that the solution $Y_+-Y_-$ is
non-degenerate. This implies at once that (\ref{eqpainleve}) has
exactly the two solutions $Y_+$ and $Y_-$, while the desired
non-degeneracy property of the solution $Y_-$ follows readily. In
Proposition \ref{profelmer} of Appendix \ref{appenFelmer} we will
indicate how the arguments of \cite{felmerUniqueness} can be adapted
to provide uniqueness and non-degeneracy for (\ref{eqeta}).

The proof of the proposition is complete.

\end{proof}

\begin{rem}
In \cite{karalisourdisradial,karalisourdisresonance} we had
previously applied the same idea, used in the proof of Proposition
\ref{proYpm} for the study of $Y_-$, to the problem
\[
y''=y^2-s^2,\ \ s\in \mathbb{R};\ \ y(s)-|s|\to 0\ \textrm{as}\
|s|\to \infty,
\]
and showed that it has exactly two solutions, one which  is stable
and another one which is unstable. Interestingly enough, during
the preparation of the current article, we came across the paper
\cite{holmes} where the same result was previously obtained by
different techniques (which are similar to those that were
subsequently used in \cite{holmespainleve}). Actually, the
non-degeneracy of the unstable solution to the above problem,
which we also proved in \cite{karalisourdisradial} and enabled us
to carry out the corresponding perturbation analysis, does not
seem to be contained in \cite{holmes}.
\end{rem}
\begin{rem}
In \cite{holmespainleve} it was also shown that there are
solutions to (\ref{eqpainleve}) that approach $-\sqrt{s}$, instead
of $\sqrt{s}$, as $s\to \infty$. For a thorough analysis of such
solutions and more up to date references, we refer the interested
reader to \cite{hastingsBOOK}.
\end{rem}

It is easy to show that
\begin{equation}\label{eqY-sqrt}
Y(s)-s^\frac{1}{2}=\mathcal{O}(s^{-2})\ \textrm{as}\ s\to\infty,
\end{equation}
and
\begin{equation}\label{eqY'}
Y'(s)=\frac{1}{2}s^{-\frac{1}{2}}+\mathcal{O}(s^{-3}),\ \
Y''(s)=\mathcal{O}(s^{-\frac{3}{2}})\ \ \textrm{as}\ \ s\to
\infty,
\end{equation}
(see also \cite[App. A]{sourdis-fife}).

Let $Y,\ Z$ denote either one of $Y_\pm$, we define the inner
solution of (\ref{eqEq}) near $x=-1$ as
\begin{equation}\label{equinner-}
u_{in}(x)=(2A)^\frac{2}{5}Y\left((2A)^\frac{1}{5}(x+1)\right)\
\textrm{for}\ 0\leq s\equiv (2A)^\frac{1}{5}(x+1)\leq
\delta(2A)^\frac{1}{5},
\end{equation}
 where $\delta>0$ is a small constant
independent of $A$. Similarly, close to $x=1$, we define
\begin{equation}\label{equinner+}
u_{in}(x)=(2A)^\frac{2}{5}Z\left((2A)^\frac{1}{5}(1-x)\right)\
\textrm{for} \ 0\leq t\equiv (2A)^\frac{1}{5}(1-x)\leq
\delta(2A)^\frac{1}{5}.
\end{equation}

The effectiveness of $u_{in}$ as an approximate solution can be
mainly measured from the estimate in the following proposition.

\begin{pro}\label{prouin}
We have
\[
2u_{in}''-u_{in}^2+A(1-x^2)=\mathcal{O}(A)(1-x^2)^2\ \textrm{as}\
A\to \infty,
\]
uniformly on $[-1,-1+\delta]\cup [1-\delta,1]$.
\end{pro}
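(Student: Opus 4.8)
The plan is a short, direct computation: the inner solution $u_{in}$ solves the Painlev\'e equation $2y''=y^2-s$ exactly in the blow-up variable, so the residual in (\ref{eqEq}) comes only from the difference between the true forcing $A(1-x^2)$ and its linearization at the boundary, and this difference is exactly a quadratic. I would carry out the details near $x=-1$; the case $x=1$ is obtained verbatim after replacing $s=(2A)^{1/5}(x+1)$, $Y$ by $t=(2A)^{1/5}(1-x)$, $Z$.

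First I would differentiate (\ref{equinner-}). Since $s=(2A)^{1/5}(x+1)$, the chain rule gives $u_{in}''(x)=(2A)^{4/5}Y''(s)$, and because $Y$ satisfies $2Y''=Y^2-s$ while $u_{in}(x)^2=(2A)^{4/5}Y(s)^2$, the two terms $(2A)^{4/5}Y(s)^2$ cancel:
\[
2u_{in}''-u_{in}^2=-(2A)^{4/5}s=-(2A)^{4/5}(2A)^{1/5}(x+1)=-2A(x+1),\qquad x\in[-1,-1+\delta].
\]
Adding the true forcing and factoring (with $1-x^2=(1-x)(1+x)$) then yields the clean identity
\[
2u_{in}''-u_{in}^2+A(1-x^2)=-2A(x+1)+A(1-x)(1+x)=-A(1+x)^2 ,
\]
i.e. the residual is precisely the second order Taylor remainder of $A(1-x^2)$ at $x=-1$. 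This is the one point of the argument that carries any content.

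Finally, to reach the stated form I would observe that on $[-1,-1+\delta]$ (with $\delta<1$, as assumed) one has $1-x\ge 1$, so $(1+x)^2=(1-x^2)^2/(1-x)^2\le(1-x^2)^2$; hence $\left|2u_{in}''-u_{in}^2+A(1-x^2)\right|=A(1+x)^2\le A(1-x^2)^2$, which is $\mathcal{O}(A)(1-x^2)^2$ as $A\to\infty$. The symmetric computation near $x=1$ gives residual $-A(1-x)^2$ and, since $1+x\ge 1$ there, the same bound. There is no genuine obstacle: the proof rests entirely on the exact cancellation coming from the defining ODE of $Y$ and $Z$, plus the trivial fact that $1-x$ (resp. $1+x$) is bounded away from zero on the relevant half of the interval; in particular no use is made of the asymptotics (\ref{eqY-sqrt})--(\ref{eqY'}) or of the finer properties of $Y_\pm$ from Proposition \ref{proYpm}.
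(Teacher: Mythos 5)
Your computation is correct and is essentially the paper's own argument: the exact cancellation from $2Y''=Y^2-s$ leaves precisely the quadratic Taylor remainder of $A(1-x^2)$ at $x=\mp1$, namely $-A(1\pm x)^2$, which is bounded by $A(1-x^2)^2$ on the relevant half-interval since $1\mp x\geq 1$ there. The paper states this more tersely (``linearizing $1-x^2$ at $x=-1$ and using (\ref{eqpainleve})''), so your write-up just supplies the same steps in full detail.
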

\begin{proof}
We will sketch the proof in the case where $x\in [-1,-1+\delta]$,
the other case can be treated identically. The desired estimate
follows readily by linearizing $1-x^2$ at $x=-1$, which reads as
\[
1-x^2=2(x+1)-(x+1)^2,
\]
and using (\ref{eqpainleve}).

 The proof of the proposition is complete.\end{proof}

\subsection{The modified outer solution $\tilde{u}_{out}$} Instead of
using the outer solution
\[
u_{out}=\sqrt{A(1-x^2)},
\]
 we will use a more sophisticated approximation
\begin{equation}\label{equoutmod}
\tilde{u}_{out}=\left\{A(1-x^2)-(2A)^\frac{4}{5}\left[s-Y^2(s)\right]n_\delta(1+x)-(2A)^\frac{4}{5}\left[t-Z^2(t)\right]n_\delta(1-x)
\right\}^\frac{1}{2},
\end{equation}
$x\in [-1,1]$, where $s,t$ as in (\ref{equinner-}),
(\ref{equinner+}) respectively (but now defined on $[0,\infty)$),
and $n_\delta$ is a smooth cutoff function such that
\begin{equation}\label{eqcutoff}
n_d(r)=\left\{\begin{array}{ll}
                     1 & \textrm{if}\ |r|\leq d, \\
                       &   \\
                     0 & \textrm{if}\ |r|\geq 2d.
                   \end{array}
 \right.
\end{equation}

Our motivation for the definition of $\tilde{u}_{out}$ comes from
\cite{karalisourdisresonance,karaliGS}. However, let us note that
formulas of related nature can be found (at the formal level) in
some books of asymptotic analysis (see \cite[Ch. 8]{miller}).

The main result concerning $\tilde{u}_{out}$ is the following.

\begin{pro}\label{prouout}
We have
\begin{equation}\label{equoutremain}
2\tilde{u}_{out}''-\tilde{u}_{out}^2+A(1-x^2)=\mathcal{O}(A^\frac{1}{2})(1-x^2)^{-\frac{1}{2}},
\end{equation}
uniformly on $[-1+\delta^{-1}(2A)^{-\frac{1}{5}},
1-\delta^{-1}(2A)^{-\frac{1}{5}}]$, as $A\to \infty$. Moreover, we
find that
\begin{equation}\label{equout-uin}\begin{array}{c}
 \tilde{u}_{out}-u_{in}=\mathcal{O}(A^\frac{1}{2})(1-x^2)^\frac{3}{2},\
\left(\tilde{u}_{out}-u_{in}\right)'=\mathcal{O}(A^\frac{1}{2})(1-x^2)^\frac{1}{2}, \\
    \\
  \left(\tilde{u}_{out}-u_{in}\right)''=\mathcal{O}(A^\frac{1}{2})(1-x^2)^{-\frac{1}{2}},
 \\
\end{array}
\end{equation}
uniformly on $[-1+\delta^{-1}(2A)^{-\frac{1}{5}},-1+\delta]\cup
[1-\delta,1-\delta^{-1}(2A)^{-\frac{1}{5}}]$, as $A\to \infty$.
\end{pro}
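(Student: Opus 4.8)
The plan is to prove both assertions by direct computation, exploiting the fact that, by construction, $\tilde u_{out}^{\,2}$ is a controlled perturbation of $u_{in}^{\,2}$ on the two boundary strips and of $A(1-x^2)$ away from them. Writing $P$ for the bracketed quantity in $(\ref{equoutmod})$, so that $\tilde u_{out}=\sqrt{P}$, I would start from the elementary identity
\[
2\tilde u_{out}''-\tilde u_{out}^{\,2}+A(1-x^2)=\frac{P''}{\sqrt{P}}-\frac{(P')^2}{2P^{3/2}}+\bigl(A(1-x^2)-P\bigr),
\]
valid wherever $P>0$, and first check this positivity: on $[-1+\delta,1-\delta]$ the cutoffs make the correction terms of strictly lower order in $A$ than $A(1-x^2)\geq A\delta$, while on each boundary strip one has $P=u_{in}^{\,2}-A(x+1)^2=u_{in}^{\,2}(1-\varepsilon)$ with $0<\varepsilon<1$, using the lower bound $u_{in}\geq cA^{1/2}(x+1)^{1/2}>0$; this lower bound holds provided $\delta$ is fixed small enough that $s=(2A)^{1/5}(x+1)\geq\delta^{-1}$ already lies past the possible zero of $Y_-$ and inside the regime where $(\ref{eqY-sqrt})$--$(\ref{eqY'})$ are effective.

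Next I would establish the matching estimates $(\ref{equout-uin})$, which also feed into $(\ref{equoutremain})$ near the endpoints. On $-1+\delta^{-1}(2A)^{-1/5}\leq x+1\leq\delta$ the cutoffs satisfy $n_\delta(1+x)=1$, $n_\delta(1-x)=0$, and since $(2A)^{4/5}s=2A(x+1)$ and $(2A)^{4/5}Y^2(s)=u_{in}^{\,2}$, this yields the \emph{exact} relation $\tilde u_{out}^{\,2}=u_{in}^{\,2}-A(x+1)^2$; equivalently, with $\phi:=\tilde u_{out}-u_{in}$, that $2u_{in}\phi+\phi^2=-A(x+1)^2$. Differentiating this relation once and twice and solving successively for $\phi,\phi',\phi''$, then inserting $u_{in}=\mathcal O(A^{1/2})(x+1)^{1/2}$, $u_{in}'=\mathcal O(A^{1/2})(x+1)^{-1/2}$, $u_{in}''=\mathcal O(A^{1/2})(x+1)^{-3/2}$ and $\tilde u_{out}\geq cA^{1/2}(x+1)^{1/2}$ (all consequences of $(\ref{eqY-sqrt})$--$(\ref{eqY'})$), gives the three estimates of $(\ref{equout-uin})$ after replacing powers of $x+1$ by powers of $1-x^2$ — legitimate because $1-x$ stays between two positive constants on this set. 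The endpoint near $x=1$ is handled identically.

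For $(\ref{equoutremain})$ I would split the interval. On $-1+\delta\leq x\leq 1-\delta$, $1-x^2$ is bounded below by a positive constant, so it suffices to bound the left-hand side by $\mathcal O(A^{1/2})$; there the correction terms in $P$ involve arguments of order $(2A)^{1/5}$, and from $s-Y^2(s)=\mathcal O(s^{-3/2})$, $1-2YY'(s)=\mathcal O(s^{-5/2})$ and $(Y')^2+YY''=\mathcal O(s^{-1})$ (consequences of $(\ref{eqY-sqrt})$--$(\ref{eqY'})$ and $2Y''=Y^2-s$), together with $n_\delta,n_\delta',n_\delta''=\mathcal O(1)$, one checks that these corrections and their first two $x$-derivatives are $\mathcal O(A^{1/2})$, whence $P=A(1-x^2)\bigl(1+\mathcal O(A^{-1/2})\bigr)$, $P'=\mathcal O(A)$ and $P''=\mathcal O(A)$; feeding these into the identity above produces $\mathcal O(A^{1/2})$. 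On the boundary strip $-1+\delta^{-1}(2A)^{-1/5}\leq x+1\leq\delta$ (and its mirror) I would not use the identity directly but rather the exact relation above: since a direct computation from $(\ref{eqpainleve})$ (as in the proof of Proposition $\ref{prouin}$) gives $\rho_{in}:=2u_{in}''-u_{in}^{\,2}+A(1-x^2)=-A(x+1)^2$ \emph{exactly}, we have $\tilde u_{out}^{\,2}=u_{in}^{\,2}+\rho_{in}$, so $2u_{in}\phi+\phi^2=\rho_{in}$, and therefore
\[
2\tilde u_{out}''-\tilde u_{out}^{\,2}+A(1-x^2)=\rho_{in}+2\phi''-\bigl(2u_{in}\phi+\phi^2\bigr)=2\phi'',
\]
which is $\mathcal O(A^{1/2})(1-x^2)^{-1/2}$ by the third estimate of $(\ref{equout-uin})$ just proved. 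The only point requiring genuine care is the uniform validity of $(\ref{eqY-sqrt})$--$(\ref{eqY'})$ and of $u_{in}>0$ down to $s\sim\delta^{-1}$: one fixes $\delta$ small first so that all of these hold throughout the relevant range, and only then lets $A\to\infty$, so that the resulting $\mathcal O$-constants may depend on $\delta$; everything else is the routine chain-rule bookkeeping indicated above, which I would write out explicitly only for $x$ near $-1$.
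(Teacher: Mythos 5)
Your argument is correct and follows essentially the same route as the paper: on the boundary strips you exploit the exact relation $\tilde u_{out}^{\,2}=u_{in}^{\,2}-A(x+1)^2$ (equivalent to the paper's (\ref{eqeasyway})) together with the asymptotics (\ref{eqY-sqrt})--(\ref{eqY'}) to obtain (\ref{equout-uin}) and then reduce the residual there to $2\left(\tilde u_{out}-u_{in}\right)''$, while the cutoff/middle region is handled by crude $\mathcal{O}(A^\frac{1}{2})$ bounds, exactly as in the paper; your implicit differentiation of $2u_{in}\phi+\phi^2=-A(x+1)^2$ versus the paper's explicit differentiation of the square-root formula is only a bookkeeping difference. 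One cosmetic remark: from the cited bound $(Y')^2+YY''=\mathcal{O}(s^{-1})$ alone, the second derivative of the correction terms on $[-1+\delta,1-\delta]$ is only $\mathcal{O}(A)$ rather than $\mathcal{O}(A^\frac{1}{2})$, but this is harmless since your final step uses only $P''=\mathcal{O}(A)$.
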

\begin{proof}
If  $x\in[-1+\delta^{-1}(2A)^{-\frac{1}{5}},-1+\delta]$, recalling
(\ref{equoutmod}) and (\ref{eqcutoff}), we obtain that
\begin{equation}\label{eqeasyway}
\tilde{u}_{out}^2-A(1-x^2)=(2A)^\frac{4}{5}\left[Y^2(s)-s
\right]=u_{in}''.
\end{equation}
 In the same
interval, we can write
\[
\tilde{u}_{out}(x)= (2A)^\frac{2}{5}
\left[-\frac{1}{2}(2A)^\frac{1}{5}(x+1)^2+Y^2\left((2A)^\frac{1}{5}(x+1)\right)\right]^\frac{1}{2}.
\]
Hence, by (\ref{eqY-sqrt}) and (\ref{equinner-}), we get
\[
\tilde{u}_{out}=(2A)^\frac{2}{5}Y(s)\left[1-\frac{1}{2}(2A)^{-\frac{1}{5}}s^2Y^{-2}
\right]^\frac{1}{2}=(2A)^\frac{2}{5}Y(s)\left[1+\mathcal{O}(A^{-\frac{1}{5}})s\right]=u_{in}+
\mathcal{O}(A^\frac{1}{5})s^\frac{3}{2},
\]
uniformly for $s\in[\delta^{-1}, \delta (2A)^\frac{1}{5}]$, as $A\to
\infty$.  Moreover, direct differentiation yields that
\[
\begin{array}{lll}
  \tilde{u}_{out}' & = & \frac{(2A)^\frac{2}{5}}{2}
\left[-(2A)^\frac{1}{5}(x+1)+2(2A)^\frac{1}{5}YY'\right]
\left[-\frac{1}{2}(2A)^\frac{1}{5}(x+1)^2+Y^2\right]^{-\frac{1}{2}} \\
    &   &   \\
    &  = &\frac{(2A)^\frac{2}{5}}{2}
\left[-s+2(2A)^\frac{1}{5}YY'(s)\right]
\left[-\frac{1}{2}(2A)^{-\frac{1}{5}}s^2+Y^2(s)\right]^{-\frac{1}{2}},
\end{array}
\]
and
\[
\begin{array}{lll}
  \tilde{u}_{out}'' & = & \frac{(2A)^\frac{2}{5}}{2}
\left[-(2A)^\frac{1}{5}+2(2A)^\frac{2}{5}(Y')^2(s)+2(2A)^\frac{2}{5}Y''Y(s)\right]\\
& & \times
\left[-\frac{1}{2}(2A)^{-\frac{1}{5}}s^2+Y^2(s)\right]^{-\frac{1}{2}}
 \\
    &   &   \\
    &   & -\frac{(2A)^\frac{2}{5}}{4}\left[-s+2(2A)^\frac{1}{5}YY'(s)
\right]^2\left[-\frac{1}{2}(2A)^{-\frac{1}{5}}s^2+Y^2(s)\right]^{-\frac{3}{2}}.
\end{array}
\]
By (\ref{eqY-sqrt}), (\ref{eqY'}), and (\ref{equinner-}), we have
that
\[\begin{array}{lll}
    \tilde{u}_{out}' & = & \left[u_{in}'-\frac{(2A)^\frac{2}{5}}{2}sY^{-1}\right]\left[1-\frac{1}{2}(2A)^{-\frac{1}{5}}s^2Y^{-2}
\right]^{-\frac{1}{2}} \\
      &    &   \\
      &  = & \left[(2A)^\frac{3}{5}Y'(s)-\frac{(2A)^\frac{2}{5}}{2}\mathcal{O}(s^\frac{1}{2})\right]\left[1-\frac{1}{2}(2A)^{-\frac{1}{5}}\mathcal{O}(s)
\right]
 \\
      &   &   \\
      &   =&u_{in}'+ \mathcal{O}(A^\frac{2}{5})s^\frac{1}{2}
  \end{array}
\]
uniformly for $s\in[\delta^{-1}, \delta (2A)^\frac{1}{5}]$, as $A\to
\infty$.  In the same fashion, we  can show that
\[
\tilde{u}_{out}''=u_{in}''+\mathcal{O}(A^\frac{3}{5})s^{-\frac{1}{2}},
\]
uniformly for $s\in[\delta^{-1}, \delta (2A)^\frac{1}{5}]$, as $A\to
\infty$. We point out that
$\tilde{u}_{out}''=\left(\tilde{u}_{out}''\right)_1-\left(\tilde{u}_{out}''\right)_2$,
with the obvious notation, and
\[\begin{array}{l}
 \left(\tilde{u}_{out}''\right)_1=(2A)^\frac{4}{5}Y''+(2A)^\frac{4}{5}(Y')^2Y^{-1}+\mathcal{O}(A^\frac{3}{5})s^{-\frac{1}{2}}, \\
    \\
  \left(\tilde{u}_{out}''\right)_2=(2A)^\frac{4}{5}(Y')^2Y^{-1}+\mathcal{O}(A^\frac{3}{5})s^{-\frac{1}{2}}, \\
\end{array}
\]
uniformly as $A\to \infty$.

In $[-1+\delta, -1+2\delta]$, it follows readily from
(\ref{eqY-sqrt}), (\ref{eqY'}), and (\ref{equoutmod}) that
\[
\tilde{u}_{out}^2-A(1-x^2)=\mathcal{O}(A^\frac{1}{2}),\
\tilde{u}_{out}'=\mathcal{O}(A^\frac{1}{2}),\
\tilde{u}_{out}''=\mathcal{O}(A^\frac{1}{2}),
\]
uniformly as $A\to \infty$ (An easy way to see these is to note that
we have $\tilde{u}_{out}\geq c A^\frac{1}{2}$ and then differentiate
twice (\ref{eqeasyway}) with righthand side multiplied by the
cutoff). Similar estimates hold for the remaining regions of
$[-1,1]$.

The desired assertions of the proposition follow readily from the
above relations.
\end{proof}

The following estimates will be useful in the sequel.

\begin{lem}\label{lemuout-sqrt}
We have
\begin{equation}\label{equout-sqrt}
\tilde{u}_{out}=\sqrt{A(1-x^2)}+\mathcal{O}\left((1-x^2)^{-2}
\right)
\end{equation}
uniformly on
$[-1+\delta^{-1}(2A)^{-\frac{1}{5}},-1+2\delta]\cup[1-2\delta,1-\delta^{-1}(2A)^{-\frac{1}{5}}]$,
and
\begin{equation}\label{equout=sqrt}
\tilde{u}_{out}=\sqrt{A(1-x^2)},\ \ x\in [-1+2\delta,1-2\delta].
\end{equation}
\end{lem}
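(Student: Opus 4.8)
The plan is to read everything off the defining formula (\ref{equoutmod}), using that $\tilde{u}_{out}^2$ equals $A(1-x^2)$ up to two cutoff-localized terms that are controlled explicitly by the decay estimate (\ref{eqY-sqrt}) for $Y$ (and the analogous one for $Z$). The second identity (\ref{equout=sqrt}) is immediate: if $x\in[-1+2\delta,1-2\delta]$ then $1+x\geq 2\delta$ and $1-x\geq 2\delta$, so $n_\delta(1+x)=n_\delta(1-x)=0$ by (\ref{eqcutoff}), and (\ref{equoutmod}) collapses to $\tilde{u}_{out}^2=A(1-x^2)$.

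For (\ref{equout-sqrt}) I would treat the portion near $x=-1$; the portion near $x=1$ is identical with $Z,t$ in place of $Y,s$. On $[-1+\delta^{-1}(2A)^{-1/5},-1+2\delta]$ one has $1-x\geq 2-2\delta$, so for $\delta$ small $n_\delta(1-x)=0$ and (\ref{equoutmod}) reads $\tilde{u}_{out}^2=A(1-x^2)+R(x)$ with $R(x)=-(2A)^{4/5}\bigl[s-Y^2(s)\bigr]n_\delta(1+x)$ and $s=(2A)^{1/5}(1+x)\geq\delta^{-1}$. First I would record that $Y^2(s)-s=(Y(s)-s^{1/2})(Y(s)+s^{1/2})=\mathcal{O}(s^{-3/2})$ uniformly for $s\geq\delta^{-1}$, directly from (\ref{eqY-sqrt}). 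Since $|n_\delta|\leq 1$ and $1-x$ stays in a fixed compact subinterval of $(0,\infty)$ on this interval — whence $s\asymp(2A)^{1/5}(1-x^2)$ — this yields $R(x)=\mathcal{O}\!\left(A^{1/2}(1-x^2)^{-3/2}\right)$.

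Next I would fix $\delta$ small, independently of $A$, so that $|R(x)|\leq\tfrac12 A(1-x^2)$ throughout the interval; this is possible because there $1-x^2\geq c(2A)^{-1/5}$, so the inequality $CA^{1/2}(1-x^2)^{-3/2}\leq\tfrac12 A(1-x^2)$ reduces to $1-x^2\geq C'A^{-1/5}$, which holds once $\delta^{-1}$ exceeds a fixed constant. It follows that $\tilde{u}_{out}^2\geq\tfrac12 A(1-x^2)>0$, so $\tilde{u}_{out}$ is well defined and $\tilde{u}_{out}+\sqrt{A(1-x^2)}\geq\sqrt{A(1-x^2)}$. Finally, from
\[
\tilde{u}_{out}-\sqrt{A(1-x^2)}=\frac{R(x)}{\tilde{u}_{out}+\sqrt{A(1-x^2)}},
\]
the numerator is $\mathcal{O}(A^{1/2}(1-x^2)^{-3/2})$ and the denominator is at least $A^{1/2}(1-x^2)^{1/2}$, so $\tilde{u}_{out}-\sqrt{A(1-x^2)}=\mathcal{O}\!\left((1-x^2)^{-2}\right)$ uniformly in $A$, as claimed. (Alternatively, on the sub-interval $[-1+\delta,-1+2\delta]$ the estimate follows at once from the bound $\tilde{u}_{out}^2-A(1-x^2)=\mathcal{O}(A^{1/2})$ already obtained in the proof of Proposition \ref{prouout}, since $1-x^2$ is then bounded.)

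Beyond routine algebra, the only point that needs care is the choice of $\delta$: it must be taken small enough, uniformly in $A$, that the cutoff error $R(x)$ — which blows up like $(1-x^2)^{-3/2}$ as $x$ reaches the boundary-layer scale $1-x^2\sim A^{-1/5}$ — stays dominated by the outer term $A(1-x^2)$, so that $\tilde{u}_{out}$ remains real and bounded below by a fixed multiple of $\sqrt{A(1-x^2)}$. Once this is arranged, the rest is elementary.
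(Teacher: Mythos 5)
Your proposal is correct and follows essentially the same route as the paper: in both cases one drops the far cutoff, uses (\ref{eqY-sqrt}) to bound $s-Y^{2}(s)=\mathcal{O}(s^{-3/2})$, and deduces $\tilde{u}_{out}^{2}=A(1-x^{2})\bigl[1+\mathcal{O}(s^{-5/2})\bigr]$ before taking the square root, while (\ref{equout=sqrt}) is read off from the vanishing of both cutoffs. Your conjugate-identity step and the explicit check that the remainder is dominated (so the square root is real and bounded below) are just a more detailed bookkeeping of the same expansion, consistent with the lower bound $\tilde{u}_{out}^{2}\geq\frac{A}{2}(1-x^{2})$ the paper records later in Section \ref{seclinear}.
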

\begin{proof}
If $x\in [-1+\delta^{-1}(2A)^{-\frac{1}{5}},-1+2\delta]$, which
implies that $s=(2A)^\frac{1}{5}(x+1)\geq \delta^{-1}$, from
(\ref{equoutmod}), via (\ref{eqY-sqrt}), we obtain that
\[
\tilde{u}_{out}=A^\frac{1}{2}(1-x^2)^\frac{1}{2}\left[1+\mathcal{O}(s^{-\frac{5}{2}})
\right]^\frac{1}{2}=A^\frac{1}{2}(1-x^2)^\frac{1}{2}\left[1+\mathcal{O}(s^{-\frac{5}{2}})
\right],
\]
and (\ref{equout-sqrt}) follows readily. Analogously we treat the
case where $x\in [1-2\delta,1-\delta^{-1}(2A)^{-\frac{1}{5}}]$.
Relation (\ref{equout=sqrt}) follows immediately from the
definitions (\ref{equoutmod}) and (\ref{eqcutoff}).

The proof of the lemma is complete.
\end{proof}

\subsection{Gluing the inner and outer approximations in order to create  the global approximation $u_{ap}$}
We define our global approximate solution to (\ref{eqEq}) to be the
smooth function
\begin{equation}\label{equap}
u_{ap}=\left\{
\begin{array}{ll}
  u_{in}, & x\in [-1,-1+\delta^{-1}(2A)^{-\frac{1}{5}}]\cup[1-\delta^{-1}(2A)^{-\frac{1}{5}},1],      \\
   &  \\
   \tilde{u}_{out}+(\chi_-+\chi_+)(u_{in}-\tilde{u}_{out}), &x\in
   [-1+\delta^{-1}(2A)^{-\frac{1}{5}},1-\delta^{-1}(2A)^{-\frac{1}{5}}],
\end{array}
\right.
\end{equation}
with $u_{in},\ \tilde{u}_{out}$ as in
(\ref{equinner-})--(\ref{equinner+}), (\ref{equoutmod})
respectively, and
\begin{equation}\label{eqchi}
\chi_\mp(x)=n_{\delta^{-1}}\left((2A)^\frac{1}{5}(1\pm x) \right),
\end{equation}
where $n_{\delta^{-1}}$ defined through (\ref{eqcutoff}).

The main result concerning $u_{ap}$ is the following.

\begin{pro}\label{prouap}
Letting \[ \mathcal{E}\equiv2u_{ap}''-u_{ap}^2+A(1-x^2),
\] we have
\begin{equation}\label{equapremainder}
\mathcal{E}=\left\{\begin{array}{ll}
                                     \mathcal{O}(A)(1-x^2)^2, & x\in [-1,-1+\delta^{-1}(2A)^{-\frac{1}{5}}]\cup[1-\delta^{-1}(2A)^{-\frac{1}{5}},1],   \\
                                      &    \\
                                     \mathcal{O}(A^\frac{1}{2})(1-x^2)^{-\frac{1}{2}},
                                     & x\in
   [-1+\delta^{-1}(2A)^{-\frac{1}{5}},1-\delta^{-1}(2A)^{-\frac{1}{5}}],
                                   \end{array}
\right.
\end{equation}
uniformly as $A\to \infty$.
\end{pro}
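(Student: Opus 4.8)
The plan is to compute $\mathcal{E}$ separately in the three regimes distinguished by the definition (\ref{equap}), reducing each to Propositions \ref{prouin} and \ref{prouout} together with elementary bounds on the cutoffs. Observe first that for $A$ large the supports of $\chi_-$ and $\chi_+$ are disjoint, being contained in $\{1+x\le 2\delta^{-1}(2A)^{-1/5}\}$ and $\{1-x\le 2\delta^{-1}(2A)^{-1/5}\}$ respectively, so near each endpoint at most one cutoff is active and the analyses near $x=-1$ and $x=1$ are identical (with $Z,t,\chi_+$ replacing $Y,s,\chi_-$); we therefore focus on $x=-1$.

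The two easy regimes are as follows. On $[-1,-1+\delta^{-1}(2A)^{-1/5}]\cup[1-\delta^{-1}(2A)^{-1/5},1]$ we have $u_{ap}=u_{in}$, hence $\mathcal{E}=2u_{in}''-u_{in}^2+A(1-x^2)$, and since $\delta^{-1}(2A)^{-1/5}<\delta$ for $A$ large, Proposition \ref{prouin} yields directly the first line of (\ref{equapremainder}). On the part of $[-1+\delta^{-1}(2A)^{-1/5},1-\delta^{-1}(2A)^{-1/5}]$ on which $\chi_-=\chi_+=0$ — i.e.\ where $1\pm x\ge 2\delta^{-1}(2A)^{-1/5}$ — we have $u_{ap}=\tilde u_{out}$, and (\ref{equoutremain}) gives $\mathcal{E}=\mathcal{O}(A^{1/2})(1-x^2)^{-1/2}$ there.

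It remains to treat the transition zone $1+x\in[\delta^{-1}(2A)^{-1/5},2\delta^{-1}(2A)^{-1/5}]$, where $u_{ap}=\tilde u_{out}-\chi_-v$ with $v:=\tilde u_{out}-u_{in}$; here I would expand
\[
\mathcal{E}=\big(2\tilde u_{out}''-\tilde u_{out}^2+A(1-x^2)\big)-2\chi_-''v-4\chi_-'v'-2\chi_-v''+2\chi_-v\,\tilde u_{out}-\chi_-^2v^2 .
\]
The parenthesised part is $\mathcal{O}(A^{1/2})(1-x^2)^{-1/2}$ by (\ref{equoutremain}). For the other five terms the key point is that throughout this zone $1-x^2$ is comparable to $(2A)^{-1/5}$; consequently (\ref{equout-uin}) gives $v=\mathcal{O}(A^{1/5})$, $v'=\mathcal{O}(A^{2/5})$, $v''=\mathcal{O}(A^{3/5})$, the chain rule applied to (\ref{eqchi}) gives $\chi_-'=\mathcal{O}(A^{1/5})$, $\chi_-''=\mathcal{O}(A^{2/5})$, and Lemma \ref{lemuout-sqrt} gives $\tilde u_{out}=\mathcal{O}(A^{2/5})$. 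Multiplying these out, each of the five terms is $\mathcal{O}(A^{3/5})$ (the nonlinear term $\chi_-^2v^2$ being even smaller, $\mathcal{O}(A^{2/5})$), and since $A^{3/5}$ is comparable to $A^{1/2}(1-x^2)^{-1/2}$ in this zone, one obtains $\mathcal{E}=\mathcal{O}(A^{1/2})(1-x^2)^{-1/2}$ here as well. Assembling the three regimes gives (\ref{equapremainder}).

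The only genuinely delicate step is this last one: one must check that the algebraic growth of the cutoff derivatives $\chi_-'\sim A^{1/5}$, $\chi_-''\sim A^{2/5}$ is exactly compensated by the smallness of $v,v',v''$, which rests on the sharp estimates (\ref{equout-uin}) for $\tilde u_{out}-u_{in}$ and on the gluing being performed precisely at the scale $1-x^2\sim(2A)^{-1/5}$. The rest is routine bookkeeping.
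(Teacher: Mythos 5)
Your proposal is correct and follows essentially the same route as the paper: outside the gluing zones the estimate is read off from Propositions \ref{prouin} and \ref{prouout}, and in the interpolation zone one expands $\mathcal{E}$ into the outer remainder plus the cutoff terms $2\chi_-''(u_{in}-\tilde u_{out})+4\chi_-'(u_{in}-\tilde u_{out})'+2\chi_-(u_{in}-\tilde u_{out})''-2\tilde u_{out}\chi_-(u_{in}-\tilde u_{out})-\chi_-^2(u_{in}-\tilde u_{out})^2$ and bounds each term by $\mathcal{O}(A^{3/5})=\mathcal{O}(A^{1/2})(1-x^2)^{-1/2}$ using (\ref{equout-uin}) and $1-x^2\sim(2A)^{-1/5}$ there. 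The only cosmetic difference is that you convert to pure powers of $A$ in the transition zone while the paper keeps powers of $(1+x)$; the bookkeeping is equivalent.
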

\begin{proof}
Outside of the interpolation region
$[-1+\delta^{-1}(2A)^{-\frac{1}{5}},-1+2\delta^{-1}(2A)^{-\frac{1}{5}}]\cup
[1-2\delta^{-1}(2A)^{-\frac{1}{5}},1-\delta^{-1}(2A)^{-\frac{1}{5}}]$,
relation (\ref{equapremainder}) follows at once from the assertions
of Propositions \ref{prouin} and \ref{prouout}. In
$[-1+\delta^{-1}(2A)^{-\frac{1}{5}},-1+2\delta^{-1}(2A)^{-\frac{1}{5}}]$,
by the estimates of Proposition \ref{prouin} and \ref{prouout}, we
have that
\begin{equation}\label{equapremcalcut}
\begin{array}{lll}
  2u_{ap}''-u_{ap}^2+A(1-x^2) & = & 2\tilde{u}_{out}''-\tilde{u}_{out}^2+A(1-x^2) \\
    &   & +2\chi_-''(u_{in}-\tilde{u}_{out})+4\chi_-'(u_{in}-\tilde{u}_{out})'+2\chi_-(u_{in}-\tilde{u}_{out})'' \\
    &   & -2\tilde{u}_{out}\chi_-(u_{in}-\tilde{u}_{out})-\chi_-^2(u_{in}-\tilde{u}_{out})^2 \\
   &  &  \\
    & =  &\mathcal{O}(A^\frac{1}{2})(1+x)^{-\frac{1}{2}}     \\
    &   & +\mathcal{O}\left(A^\frac{1}{2}(1+x)^\frac{3}{2}A^\frac{2}{5}+A^\frac{1}{5}A^\frac{1}{2}(1+x)^\frac{1}{2}+A^\frac{1}{2}(1+x)^{-\frac{1}{2}} \right)  \\
    &   &  +\mathcal{O}\left(A^\frac{2}{5}A^\frac{1}{2}(1+x)^\frac{3}{2}+A(1+x)^3 \right) \\
    &   &\\
    &=&\mathcal{O}(A^\frac{1}{2})(1+x)^{-\frac{1}{2}},
\end{array}
\end{equation}
uniformly as $A\to \infty$. Analogous estimates hold true in the
interpolation region
$[1-2\delta^{-1}(2A)^{-\frac{1}{5}},1-\delta^{-1}(2A)^{-\frac{1}{5}}]$.

The proof of the proposition is complete.
\end{proof}

We have the following two easy corollaries.

\begin{cor}\label{coruapremainder}
We have
\begin{equation}\label{eqremainderuapGlobal}
\|\mathcal{E}\|_{L^\infty(-1,1)}=\|2u_{ap}''-u_{ap}^2+A(1-x^2)\|_{L^\infty(-1,1)}\leq
CA^\frac{3}{5}.
\end{equation}
\end{cor}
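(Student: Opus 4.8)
The plan is to deduce the global bound (\ref{eqremainderuapGlobal}) directly from the two-region estimate (\ref{equapremainder}) of Proposition \ref{prouap}, by inserting elementary bounds for $1-x^2$ in each region. The guiding point is that the threshold $(2A)^{-\frac{1}{5}}$ used in the definition (\ref{equap}) of $u_{ap}$ is calibrated so that the two pieces of (\ref{equapremainder}) both contribute a term of size $A^\frac{3}{5}$.

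First I would treat the boundary-layer region, and by symmetry it suffices to consider the part near $x=-1$, namely $0\leq 1+x\leq \delta^{-1}(2A)^{-\frac{1}{5}}$. There $1-x$ stays in $[1,2]$ once $A$ is large, so $1-x^2=(1-x)(1+x)\leq CA^{-\frac{1}{5}}$, hence $(1-x^2)^2\leq CA^{-\frac{2}{5}}$. Combined with the first line of (\ref{equapremainder}) this gives $|\mathcal{E}|\leq CA\cdot A^{-\frac{2}{5}}=CA^\frac{3}{5}$ there.

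Next I would treat the interior region $x\in[-1+\delta^{-1}(2A)^{-\frac{1}{5}},1-\delta^{-1}(2A)^{-\frac{1}{5}}]$. Since $1-x^2$ is concave, its minimum over this interval is attained at an endpoint, where $1-x^2=\delta^{-1}(2A)^{-\frac{1}{5}}\big(2-\delta^{-1}(2A)^{-\frac{1}{5}}\big)\geq cA^{-\frac{1}{5}}$ for $A$ large; therefore $(1-x^2)^{-\frac{1}{2}}\leq CA^\frac{1}{10}$ throughout. The second line of (\ref{equapremainder}) then yields $|\mathcal{E}|\leq CA^\frac{1}{2}\cdot A^\frac{1}{10}=CA^\frac{3}{5}$. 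Taking the supremum over $(-1,1)$ and combining the two regions gives (\ref{eqremainderuapGlobal}).

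There is no real obstacle; the only thing worth checking is the arithmetic of the exponents, $1-\frac{2}{5}=\frac{1}{2}+\frac{1}{10}=\frac{3}{5}$, which confirms that the balance between the boundary-layer and interior contributions is exact and that the cutoff scale $(2A)^{-\frac{1}{5}}$ is the optimal one.
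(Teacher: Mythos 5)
Your proposal is correct and is exactly the computation the paper leaves implicit when it says the bound "follows directly from (\ref{equapremainder})": bounding $1-x^2\leq CA^{-\frac{1}{5}}$ in the boundary-layer regions and $1-x^2\geq cA^{-\frac{1}{5}}$ in the interior region, each piece of (\ref{equapremainder}) gives $CA^{\frac{3}{5}}$. The exponent check $1-\frac{2}{5}=\frac{1}{2}+\frac{1}{10}=\frac{3}{5}$ is right, so there is nothing to add.
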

\begin{proof}
It follows directly from (\ref{equapremainder}).
\end{proof}
\begin{cor}\label{coruap-sqrt}
We have
\begin{equation}\label{equap-sqrt}
{u}_{ap}=\sqrt{A(1-x^2)}+\mathcal{O}\left((1-x^2)^{-2} \right)
\end{equation}
uniformly on
$[-1+\delta^{-1}(2A)^{-\frac{1}{5}},-1+2\delta]\cup[1-2\delta,1-\delta^{-1}(2A)^{-\frac{1}{5}}]$,
and
\begin{equation}\label{equap=sqrt}
{u}_{ap}=\sqrt{A(1-x^2)},\ \ x\in [-1+2\delta,1-2\delta].
\end{equation}
\end{cor}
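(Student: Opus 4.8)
The plan is to deduce everything directly from the definition (\ref{equap}) of $u_{ap}$ together with Lemma \ref{lemuout-sqrt}, which already records exactly the claimed expansions for the modified outer solution $\tilde u_{out}$; the only thing left to do is to estimate the interpolation correction $(\chi_-+\chi_+)(u_{in}-\tilde u_{out})$ on its (very short) support.

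First I would observe, from (\ref{eqchi}) and (\ref{eqcutoff}), that $\chi_-+\chi_+$ vanishes identically on $\left[-1+2\delta^{-1}(2A)^{-\frac{1}{5}},\,1-2\delta^{-1}(2A)^{-\frac{1}{5}}\right]$, so that $u_{ap}=\tilde u_{out}$ there. Consequently (\ref{equap=sqrt}) follows at once from (\ref{equout=sqrt}) --- note that $[-1+2\delta,1-2\delta]$ sits inside that interval once $A$ is large, since then $\delta^{-1}(2A)^{-\frac{1}{5}}<\delta$ --- and the part of (\ref{equap-sqrt}) with $x$ outside the two interpolation windows follows from (\ref{equout-sqrt}).

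It then remains to handle the interpolation windows, say $I_-=\left[-1+\delta^{-1}(2A)^{-\frac{1}{5}},\,-1+2\delta^{-1}(2A)^{-\frac{1}{5}}\right]$ and its mirror image $I_+$; by symmetry I would only treat $I_-$, where $u_{ap}=\tilde u_{out}+\chi_-(u_{in}-\tilde u_{out})$. For $A$ large, $I_-\subset\left[-1+\delta^{-1}(2A)^{-\frac{1}{5}},-1+\delta\right]$, so the second bound in (\ref{equout-uin}) gives $|u_{in}-\tilde u_{out}|\leq CA^{\frac{1}{2}}(1-x^2)^{\frac{3}{2}}$ on $I_-$. The point is that on $I_-$ the quantity $1-x^2$ is comparable to $A^{-\frac{1}{5}}$; in particular $1-x^2\leq CA^{-\frac{1}{5}}$, so $(1-x^2)^{\frac{7}{2}}\leq CA^{-\frac{7}{10}}$ and hence
\[
\bigl|\chi_-(u_{in}-\tilde u_{out})\bigr|\leq CA^{\frac{1}{2}}(1-x^2)^{\frac{3}{2}}
= CA^{\frac{1}{2}}(1-x^2)^{\frac{7}{2}}(1-x^2)^{-2}\leq CA^{-\frac{1}{5}}(1-x^2)^{-2}.
\]
Since $I_-\subset\left[-1+\delta^{-1}(2A)^{-\frac{1}{5}},-1+2\delta\right]$ as well, Lemma \ref{lemuout-sqrt} applies to $\tilde u_{out}$ on $I_-$, and adding the previous display would give $u_{ap}=\sqrt{A(1-x^2)}+\mathcal O\left((1-x^2)^{-2}\right)$ on $I_-$; the mirror argument does the same on $I_+$, finishing the proof.

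As for the difficulty: there is essentially none --- this is routine bookkeeping of powers of $A$ against the width of the cutoff transition layer. The one place that needs a moment's care is the displayed estimate: the correction $\chi_-(u_{in}-\tilde u_{out})$ has size $A^{\frac{1}{2}}(1-x^2)^{\frac{3}{2}}$, which is not small in absolute terms, yet it is nonetheless $\mathcal O\left((1-x^2)^{-2}\right)$ precisely because on the narrow window $I_-$ the factor $1-x^2$ is forced down to order $A^{-\frac{1}{5}}$, so trading $(1-x^2)^{\frac{7}{2}}$ for its upper bound $CA^{-\frac{7}{10}}$ converts the prefactor $A^{\frac{1}{2}}$ into $A^{-\frac{1}{5}}=o(1)$.
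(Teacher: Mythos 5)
Your proof is correct and follows exactly the route the paper takes: outside the cutoff transition windows $u_{ap}=\tilde u_{out}$ and everything is read off from Lemma \ref{lemuout-sqrt}, while inside the windows one invokes (\ref{equout-uin}) and uses that $1-x^2=\mathcal{O}(A^{-1/5})$ there. Your explicit power count $A^{\frac12}(1-x^2)^{\frac32}\leq CA^{-\frac15}(1-x^2)^{-2}$ is precisely the bookkeeping the paper leaves implicit when it says one ``also has to use (\ref{equout-uin})''.
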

\begin{proof}
In view of (\ref{equap}), if $x$ is not in the interpolation
intervals $[-1+\delta^{-1}(2A)^{-\frac{1}{5}},
-1+2\delta^{-1}(2A)^{-\frac{1}{5}}]$ and
$[1-2\delta^{-1}(2A)^{-\frac{1}{5}},
1-\delta^{-1}(2A)^{-\frac{1}{5}}]$, the assertions of the corollary
follow directly directly from the corresponding ones of Lemma
\ref{lemuout-sqrt}. For $x$ in the interpolation intervals, we also
have to use (\ref{equout-uin}).

The proof of the corollary is complete.
\end{proof}

\section{Linear analysis}\label{seclinear}
In this section, we will study the linearization of (\ref{eqEq})
about the approximate solution $u_{ap}$, namely the linear
Schr\"{o}dinger operator
\begin{equation}\label{eqLuap}
\mathcal{L}(\varphi)=-\varphi''+u_{ap}\varphi, \ \ \varphi\in
C^2(-1,1)\cap C[-1,1],\ \ \varphi(\pm 1)=0,
\end{equation}
(for convenience, we have divided by two).
\subsection{Properties of the potential of the Schr\"{o}dinger operator.}
In view of (\ref{equinner-}), we have
\[
(2A)^{-\frac{2}{5}}u_{in}\left(-1+(2A)^{-\frac{1}{5}}s
\right)=Y(s),\ \ 0\leq s \leq 2(2A)^\frac{1}{5}.
\]
Moreover, it follows from (\ref{equout-uin}) that
\[
(2A)^{-\frac{2}{5}}(\tilde{u}_{out}-u_{in})\left(-1+(2A)^{-\frac{1}{5}}s
\right)=\mathcal{O}(A^{-\frac{1}{5}})s^\frac{3}{2},
\]
uniformly on $\left[\delta^{-1},\delta(2A)^\frac{1}{5} \right]$, as
$A\to \infty$. Hence, via (\ref{equap}), we find that
\begin{equation}\label{equapCloc-}
(2A)^{-\frac{2}{5}}u_{ap}\left(-1+(2A)^{-\frac{1}{5}}s \right)\to
Y(s)\ \ \textrm{in}\ \ C_{loc}[0,\infty)\ \ \textrm{as}\ \ A\to
\infty.
\end{equation}
Analogously, we find that
\begin{equation}\label{equapCloc+}
(2A)^{-\frac{2}{5}}u_{ap}\left(1-(2A)^{-\frac{1}{5}}t \right)\to
Z(t)\ \ \textrm{in}\ \ C_{loc}[0,\infty)\ \ \textrm{as}\ \ A\to
\infty.
\end{equation}

The asymptotic behavior of $Y_{\pm}$ (recall (\ref{eqpainleve})) and
the definitions (\ref{equinner-})--(\ref{equinner+}) imply that
there exist constants $c,D>0$, independent of $A,\delta$, such that
\[
u_{in}\geq cA^\frac{1}{2}(1-x^2)^\frac{1}{2}
\]
for $x\in
\left[-1+D(2A)^{-\frac{1}{5}},-1+2\delta^{-1}(2A)^{-\frac{1}{5}}
\right]\cup
\left[1-2\delta^{-1}(2A)^{-\frac{1}{5}},1-D(2A)^{-\frac{1}{5}}
\right]$. Observe also that if $x\in
\left[-1+\delta^{-1}(2A)^{-\frac{1}{5}},1-\delta^{-1}(2A)^{-\frac{1}{5}}
\right]$ (which implies that $s,t\geq \delta^{-1}$), thanks to
(\ref{eqY-sqrt}) and (\ref{equoutmod}), we have
\[
   \tilde{u}_{out}^2  \geq  A(1-x^2)-CA^\frac{4}{5}\delta^\frac{3}{2} \geq  A(1-x^2)-CA\delta^\frac{5}{2}(1-x^2)\geq \frac{A}{2}(1-x^2),
\]
for some constant $C>0$ independent of both $A$ and $\delta$, where
we have decreased the value of $\delta$ if necessary. Combining the
above two relations with (\ref{equout-uin}) and (\ref{equap}), we
arrive at
\begin{equation}\label{equaplower}
u_{ap}\geq c \sqrt{\frac{A}{2}} \sqrt{1-x^2},\ \ x\in
\left[-1+D(2A)^{-\frac{1}{5}},1-D(2A)^{-\frac{1}{5}} \right].
\end{equation}
\subsection{Uniform a-priori estimates}
Let
\begin{equation}\label{eqnormweight}
\|\varphi \|_0\equiv
\|\varphi\|_{L^\infty(-1,1)}\equiv\sup_{(-1,1)}\left|\varphi(x)
\right|.
\end{equation}

The main result of this section is the following.

\begin{pro}\label{prouniform}
There exist constants $A_0,\ C>0$ such that, given $f\in C[-1,1]$,
there exists a unique classical solution to the boundary value
problem
\begin{equation}\label{eqBVP}
\mathcal{L}(\varphi)=f \ \ \textrm{in}\  \ (-1,1),\ \ \varphi(\pm
1)=0,
\end{equation}
and this solution satisfies
\begin{equation}\label{eqapriori0}
\|\varphi\|_0\leq CA^{-\frac{2}{5}}\|f\|_0,
\end{equation}
 provided that $A\geq A_0$.
\end{pro}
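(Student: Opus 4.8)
The plan is to derive the a-priori bound (\ref{eqapriori0}) for arbitrary solutions of (\ref{eqBVP}) when $A$ is large, and then deduce existence and uniqueness from it by a Fredholm argument. For the a-priori bound I would argue by contradiction: if it fails, there are $A_n\to\infty$, data $f_n\in C[-1,1]$, and solutions $\varphi_n$ of $\mathcal{L}(\varphi_n)=f_n$, $\varphi_n(\pm1)=0$, which after normalizing by $\|\varphi_n\|_0=1$ satisfy $A_n^{-\frac{2}{5}}\|f_n\|_0\to0$. I would split $[-1,1]$ into the two boundary-layer zones $[-1,-1+D(2A_n)^{-\frac{1}{5}}]$ and $[1-D(2A_n)^{-\frac{1}{5}},1]$, with $D$ the constant from (\ref{equaplower}), and the complementary outer zone $I_n:=[-1+D(2A_n)^{-\frac{1}{5}},1-D(2A_n)^{-\frac{1}{5}}]$, and show that $\varphi_n\to0$ on each, contradicting $\|\varphi_n\|_0=1$.

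In the boundary-layer zone near $x=-1$ I would rescale: set $s=(2A_n)^{\frac{1}{5}}(x+1)$ and $\psi_n(s)=\varphi_n(-1+(2A_n)^{-\frac{1}{5}}s)$, so that $-\psi_n''+V_n(s)\psi_n=(2A_n)^{-\frac{2}{5}}f_n$ on $[0,2(2A_n)^{\frac{1}{5}}]$ with $\psi_n(0)=0$, where $V_n(s):=(2A_n)^{-\frac{2}{5}}u_{ap}(-1+(2A_n)^{-\frac{1}{5}}s)$. Here $\|\psi_n\|_{L^\infty}\leq1$, the right-hand side tends to $0$ uniformly, and $V_n\to Y$ in $C_{loc}[0,\infty)$ by (\ref{equapCloc-}); hence $\psi_n''$ is bounded on compacts, and by Arzel\`a--Ascoli and a diagonal argument a subsequence converges in $C^1_{loc}[0,\infty)$ to a bounded $\psi_\infty\in C^2[0,\infty)$ solving $\psi_\infty''-Y\psi_\infty=0$ on $[0,\infty)$ with $\psi_\infty(0)=0$. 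By the non-degeneracy of $Y$ in Proposition \ref{proYpm}, $\psi_\infty\equiv0$; in particular $\sup_{[0,D]}|\psi_n|\to0$, i.e. $\varphi_n\to0$ uniformly on $[-1,-1+D(2A_n)^{-\frac{1}{5}}]$. The zone near $x=1$ is treated identically via (\ref{equapCloc+}) and the non-degeneracy of $Z$, so also $\varphi_n(\pm(1-D(2A_n)^{-\frac{1}{5}}))\to0$.

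On the outer zone $I_n$ I would invoke the maximum principle and (\ref{equaplower}): there $u_{ap}\geq c\sqrt{A_n/2}\sqrt{1-x^2}$, and since $1-x^2\geq D(2A_n)^{-\frac{1}{5}}$ on $I_n$ for $A_n$ large, this gives $u_{ap}\geq c'A_n^{\frac{2}{5}}$ uniformly on $I_n$. If $\sup_{I_n}|\varphi_n|$ is attained on $\partial I_n$ it tends to $0$ by the previous paragraph; if it is attained at an interior positive maximum $x_n$, then $\varphi_n''(x_n)\leq0$ forces $u_{ap}(x_n)\varphi_n(x_n)\leq f_n(x_n)$, hence $\sup_{I_n}|\varphi_n|\leq(c')^{-1}A_n^{-\frac{2}{5}}\|f_n\|_0\to0$, and symmetrically for an interior negative minimum. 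Combining the three zones gives $\|\varphi_n\|_0\to0$, a contradiction; this proves (\ref{eqapriori0}) for $A\geq A_0$ with uniform $C$. Finally, for such $A$ the homogeneous problem has only the trivial solution (apply (\ref{eqapriori0}) with $f=0$), so $0$ is not a Dirichlet eigenvalue of $-d^2/dx^2+u_{ap}$; since $(-d^2/dx^2)^{-1}(u_{ap}\,\cdot)$ is compact on $C[-1,1]$, the Fredholm alternative yields a unique solution $\varphi$ of (\ref{eqBVP}) for every $f\in C[-1,1]$, and $\varphi''=u_{ap}\varphi-f\in C[-1,1]$ shows it is classical. The main obstacle is the boundary-layer step: everything hinges on the non-degeneracy of $Y_-$ from Proposition \ref{proYpm}—the delicate point established through the reduction to the NLS ground-state problem—while the outer-zone estimate and the rest are soft compactness plus the elementary maximum principle.
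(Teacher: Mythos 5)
Your argument is correct, and it rests on exactly the same two pillars as the paper's proof: the lower bound (\ref{equaplower}) on the potential away from the endpoints, and the non-degeneracy of $Y_\pm$ from Proposition \ref{proYpm} fed into a rescaling/compactness argument via (\ref{equapCloc-})--(\ref{equapCloc+}), followed by the standard Fredholm reduction of existence to the a-priori bound. The organization differs, though. The paper does not estimate zone by zone: it picks a point $x_n$ where $|\varphi_n|$ attains its maximum $1$, evaluates the equation there to conclude from (\ref{equaplower}) that $x_n$ must lie in one of the boundary layers, and then blows up around that point; the limit $\Phi_*$ inherits $\Phi_*(s_*)=1$, so it is a \emph{nontrivial} bounded element of the kernel, and the contradiction is with non-degeneracy itself. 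You instead show smallness separately on the two layers (any subsequential blow-up limit is a bounded kernel element, hence zero by non-degeneracy, giving $\sup_{[0,D]}|\psi_n|\to 0$) and then propagate this smallness through the outer zone by the maximum principle with the layer endpoints as boundary data, reaching the contradiction with the normalization $\|\varphi_n\|_0=1$. Your route costs an extra (elementary) maximum-principle step in the outer region and a little subsequence bookkeeping so that both layers and the outer estimate run along a common subsequence, but it yields slightly more information (uniform smallness of $\varphi_n$ on all of $[-1,1]$, not just a contradiction at the max point); the paper's max-point trick is leaner because the nontriviality of the limit and the exclusion of the outer region come for free from the same evaluation of the equation at $x_n$. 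Both are sound; just make sure, in your layer step, to record that the interpolation bound on $\psi_n'$ (as in Lemma \ref{lemphi'}) is what gives the equicontinuity needed for Arzel\`a--Ascoli in $C^1_{loc}$.
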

\begin{proof}
To establish existence and uniqueness for (\ref{eqBVP}), it suffices
to show the a-priori estimate (\ref{eqapriori0}) which implies that
the kernel of $\mathcal{L}$ is empty (see for example
\cite{walter}). Suppose that the latter estimate does not hold.
Then, there would exist sequences $A_n>0$, $\varphi_n \in
C^2[-1,1]$, $f_n\in C[-1,1]$ such that
\begin{equation}\label{eqconta1}
\mathcal{L}(\varphi_n)=f_n \ \ \textrm{in}\  \ (-1,1),\ \
\varphi_n(\pm 1)=0,
\end{equation}
\begin{equation}\label{eqconta2}
A_n\to \infty,\ \ \|\varphi_n\|_0=1, \ \ \textrm{and}\ \
A_n^{-\frac{2}{5}}\|f_n\|_0\to 0.
\end{equation}
Without loss of generality, we may assume that there are $x_n\in
(-1,1)$ such that
\[
\varphi_n(x_n)=\|\varphi_n\|_0=1,\ \ \varphi_n'(x_n)=0,\ \
\textrm{and}\ \ \varphi_n''(x_n)\leq 0,
\]
(otherwise we can consider $-\varphi_n$). Equation (\ref{eqconta1}),
for $x=x_n$, gives us that
\[
u_{ap}(x_n)\leq f_n(x_n).
\]
In view of (\ref{equaplower}) and (\ref{eqconta2}), we find that
\[\textrm{the}\  x_n\textrm{'s\ cannot\ be\ in} \
 \left(-1+2\delta^{-1}(2A_n)^{-\frac{1}{5}},1-2\delta^{-1}(2A_n)^{-\frac{1}{5}}\right)
\ \textrm{for large}\ n.
 \]
 Consequently, there are infinitely many
$n$'s such that
\[x_n\in\left(-1,-1+2\delta^{-1}(2A_n)^{-\frac{1}{5}}\right]\ \
\textrm{or}\ \
x_n\in\left[1-2\delta^{-1}(2A_n)^{-\frac{1}{5}},1\right).\] We may
assume, without loss of generality, that the former case occurs.
Therefore, abusing notation, we can choose a subsequence so that
\begin{equation}\label{eqxn} x_n\in
\left(-1,-1+2\delta^{-1}(2A_n)^{-\frac{1}{5}}\right],\ \ n\geq 1.
\end{equation}
Let
\[
\Phi_n(s)\equiv \varphi_n(x),\ \ F_n(s)\equiv f_n(x),\ \
x=-1+(2A_n)^{-\frac{1}{5}}s.
\]
Then, relations (\ref{eqconta1}) and (\ref{eqconta2}) become
\begin{equation}\label{eqconta3}
\Phi_n''-(2A_n)^{-\frac{2}{5}}u_{ap}\left(-1+(2A_n)^{-\frac{1}{5}}s
\right)\Phi_n=(2A_n)^{-\frac{2}{5}}F_n
\end{equation}
in $I_n\equiv\left[0, 2(2A_n)^\frac{1}{5}\right]$, $\Phi_n=0$ on the
boundary of $I_n$, and
\begin{equation}\label{eqconta4}
\| \Phi_n\|_{L^\infty(I_n)}=1,\ \
A_n^{-\frac{2}{5}}\|F_n\|_{L^\infty(I_n)}\to 0,
\end{equation}
respectively. Furthermore, recalling (\ref{eqxn}), we have that
\begin{equation}\label{eqsn}
\Phi_n(s_n)=1,\ \ \textrm{where}\ \ s_n\equiv
(2A_n)^\frac{1}{5}(x_n+1)\in (0,2\delta^{-1}].
\end{equation}
Making use of (\ref{equapCloc-}), (\ref{eqconta3}),
(\ref{eqconta4}), (\ref{eqsn}), and a standard diagonal compactness
argument, passing to a further subsequence, we find that
\[
\Phi_n\to \Phi_*\ \ \textrm{in}\ \ C^2_{loc}[0,\infty),\ \ s_n\to
s_*\in [0,2\delta^{-1}],
\]
where
\[
\Phi_*''-Y(s)\Phi_*=0\ \ \textrm{in}\ (0,\infty),\ \ \Phi_*(0)=0, \
\ \|\Phi_*\|_{L^\infty(0,\infty)}\leq 1,\ \textrm{and}\
\Phi_*(s_*)=1.
\]
On the other hand, by the non-degeneracy of $Y_\pm$ (recall
Proposition \ref{proYpm}), we arrive at a contradiction. We have
thus established the validity of (\ref{eqapriori0}).

The proof of the proposition is complete.
\end{proof}
\begin{rem}\label{remTunel}
Let $\mu_1^\pm<\mu_2^\pm<\cdots$,  with $\mu_i^\pm\to \infty$ as
$i\to \infty$, denote the eigenvalues of the linear operators
\[
\mathcal{M}_\pm(\psi)=-\psi''+Y_\pm (s)\psi
\]
with domain $\left\{\psi \in H^2(0,\infty), \ \sqrt{s}\psi\in
L^2(0,\infty),\ \psi(0)=0 \right\}$, which are self-adjoint in
$L^2(0,\infty)$ and have only simple eigenvalues in their spectrum
since $Y_\pm(s)\to \infty$ as $s\to \infty$ (see \cite{hislop} for
more details). It follows from Propositions \ref{proYpm} and
\ref{profelmer}  that \[\mu_1^-<0,\ \ \mu_2^->0,\ \ \textrm{while}
\ \
 \mu_1^+>0.\] In the case where $u_{ap}$ is even, using the obvious
notation, we denote the eigenvalues of the linear operators
$\mathcal{L}_\pm$ in (\ref{eqLuap}) by
$\lambda_1^\pm<\lambda_2^\pm<\cdots$. Arguing as in
\cite{pelinovsky,karalisourdisradial}, it follows readily that
\[
\lambda_{i+1}^\pm-\lambda_{i}^\pm=\mathcal{O}(A^{-k})  \ \
\textrm{and} \ \ \lambda_{i}^\pm = \mu_{i}^\pm
A^{-\frac{2}{5}}+\mathcal{O}\left(A^{-\frac{4}{5}} \right), \ \
i=1,3,5,\cdots, 2\left[\frac{n}{2}\right]+1,
\]
with $k,n\in \mathbb{N}$ fixed, as $A\to \infty$. The main
observation is that, because of the simplicity of the eigenvalues,
the associated (normalized) eigenfunction to $\lambda_{2m-1}$ is
even whereas that associated to $\lambda_{2m}$ is odd, for $m\geq
1$. Thus, the eigenvalue problem for $\mathcal{L}$ in $(-1,1)$
reduces to two eigenvalue problems in $(-1,0)$ with boundary
condition $\varphi(-1)=0$, $\varphi(0)=0$ and $\varphi(-1)=0$,
$\varphi'(0)=0$ respectively. The main point being that the
reduced eigenvalue problems have only one turning point (at
$x=-1$) and the proof of \cite[Prop. 3.25]{karalisourdisradial}
applies directly. We expect that, as in \cite{nakamura}, the
difference between two clustering eigenvalues is actually
exponentially small.
\end{rem}
\begin{rem}\label{remunbalanced}
In the case where $u_{ap}$ is nonsymmetric, one can adapt the
proof of \cite[Prop. 3.25]{karalisourdisradial} to show that the
corresponding linear operator $\mathcal{L}$ in (\ref{eqLuap}) has
only one negative eigenvalue, which satisfies $\lambda_{1} =
\mu_{1}^- A^{-\frac{2}{5}}+\mathcal{O}\left(A^{-\frac{4}{5}}
\right)$ as $A\to \infty$ (where $\mu_1^-$ as in Remark
\ref{remTunel}). However, it is not clear to us how to obtain
asymptotic expansions for the rest of the eigenvalues. Certainly
this has to depend on the  ordering between $\{\mu_i^-\}$ and
$\{\mu_i^+\}$.
\end{rem}

\begin{rem}\label{remlinearnearby}
It is easy to see that relation (\ref{eqapriori0}) as well as the
assertions of Remarks \ref{remTunel} and \ref{remunbalanced}
continue to hold if the potential of $\mathcal{L}$ was
$u_{ap}+\phi$ with $A^{-\frac{2}{5}}\|\phi\|_0\to 0$ as $A\to
\infty$ (clearly $\phi$ has to be even for the latter remark to
hold).
\end{rem}
\section{Existence of solutions and estimates}\label{secexist}
We seek solutions of (\ref{eqEq}) in the form
\begin{equation}\label{eqansatz}
u=u_{ap}+\phi.
\end{equation}
Substituting this ansatz in (\ref{eqEq}), and rearranging terms, we
see that $\phi$ solves
\begin{equation}\label{eqphi}
-2\phi''+2u_{ap}\phi=-\phi^2+2u_{ap}''-u_{ap}^2+A(1-x^2),\ \ x\in
(-1,1);\ \ \phi(\pm 1)=0.
\end{equation}

The next proposition is the main result of this section.

\begin{pro}\label{proexistPhi}
If $A$ is sufficiently large, there exists a constant $C>0$ and a
unique solution of (\ref{eqphi}) such that
\begin{equation}\label{eqphi0}
\|\phi\|_0\leq CA^\frac{1}{5}.
\end{equation}
\end{pro}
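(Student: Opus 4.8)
The plan is to solve \eqref{eqphi} by a fixed-point argument built on the linear theory of Proposition \ref{prouniform}. Writing $\mathcal{L}(\phi)=-\phi''+u_{ap}\phi$ (dividing \eqref{eqphi} by $2$), the equation for $\phi$ reads
\[
\mathcal{L}(\phi)=-\tfrac{1}{2}\phi^2+\tfrac{1}{2}\mathcal{E},\qquad \phi(\pm1)=0,
\]
where $\mathcal{E}=2u_{ap}''-u_{ap}^2+A(1-x^2)$ is the remainder estimated in Proposition \ref{prouap} and Corollary \ref{coruapremainder}. Since Proposition \ref{prouniform} gives, for $A\geq A_0$, a bounded inverse $\mathcal{L}^{-1}:C[-1,1]\to C^2[-1,1]\cap\{\varphi(\pm1)=0\}$ with $\|\mathcal{L}^{-1}f\|_0\leq CA^{-2/5}\|f\|_0$, the problem is equivalent to the fixed-point equation $\phi=\mathcal{T}(\phi)$ with
\[
\mathcal{T}(\phi)\equiv\mathcal{L}^{-1}\!\left(-\tfrac{1}{2}\phi^2+\tfrac{1}{2}\mathcal{E}\right).
\]
I would then apply the contraction mapping principle on the closed ball $\mathcal{B}=\{\phi\in C[-1,1]:\|\phi\|_0\leq MA^{1/5}\}$ for a suitably large constant $M$ independent of $A$.

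The two estimates to check are the self-map property and the contraction property. For the self-map property: by Corollary \ref{coruapremainder}, $\|\mathcal{E}\|_0\leq CA^{3/5}$, so for $\phi\in\mathcal{B}$,
\[
\|\mathcal{T}(\phi)\|_0\leq CA^{-2/5}\left(\tfrac{1}{2}\|\phi\|_0^2+\tfrac{1}{2}\|\mathcal{E}\|_0\right)\leq CA^{-2/5}\left(\tfrac{1}{2}M^2A^{2/5}+\tfrac{1}{2}CA^{3/5}\right)=CA^{-2/5}\cdot\tfrac{1}{2}CA^{3/5}\bigl(1+o(1)\bigr),
\]
which is $\leq \tfrac{1}{2}C^2A^{1/5}\bigl(1+o(1)\bigr)$; choosing $M$ a bit larger than $C^2$ and then $A$ large makes this $\leq MA^{1/5}$. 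Here the key point is that the nonlinear term $\|\phi\|_0^2\leq M^2A^{2/5}$ is genuinely of \emph{lower} order than $\|\mathcal{E}\|_0\sim A^{3/5}$, so it does not spoil the balance; this is exactly why the ansatz radius $A^{1/5}=A^{-2/5}\cdot A^{3/5}$ is the natural one. For the contraction property: for $\phi_1,\phi_2\in\mathcal{B}$,
\[
\|\mathcal{T}(\phi_1)-\mathcal{T}(\phi_2)\|_0\leq CA^{-2/5}\cdot\tfrac{1}{2}\|\phi_1^2-\phi_2^2\|_0\leq CA^{-2/5}\bigl(\|\phi_1\|_0+\|\phi_2\|_0\bigr)\|\phi_1-\phi_2\|_0\leq CMA^{-1/5}\|\phi_1-\phi_2\|_0,
\]
which is a contraction with constant $o(1)$ once $A$ is large. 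Banach's fixed point theorem then yields a unique $\phi\in\mathcal{B}$ solving \eqref{eqphi}, and elliptic regularity (the equation is a second-order ODE with continuous data) upgrades it to a classical solution; the bound \eqref{eqphi0} is built into the definition of $\mathcal{B}$.

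I would also address uniqueness in the stated class: any solution of \eqref{eqphi} with $\|\phi\|_0\leq CA^{1/5}$ lies in $\mathcal{B}$ for the appropriate $M$ once $A$ is large, hence is the fixed point; alternatively, the difference of two such solutions $\psi=\phi_1-\phi_2$ solves $\mathcal{L}(\psi)=-\tfrac12(\phi_1+\phi_2)\psi$, and Proposition \ref{prouniform} gives $\|\psi\|_0\leq CA^{-2/5}(\|\phi_1\|_0+\|\phi_2\|_0)\|\psi\|_0\leq CA^{-1/5}\|\psi\|_0$, forcing $\psi\equiv0$. I expect no serious obstacle here: the linear a-priori estimate of Proposition \ref{prouniform} (whose proof, resting on the non-degeneracy of $Y_\pm$, is the real work) does all the heavy lifting, and what remains is the routine bookkeeping above. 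The only mild subtlety is ensuring the order-counting is done with the \emph{global} remainder bound $A^{3/5}$ rather than the sharper pointwise bounds of Proposition \ref{prouap}; the pointwise bounds will matter later for the refined estimates on $\phi$ near the boundary, but for the bare existence statement \eqref{eqphi0} the $L^\infty$ bound suffices.
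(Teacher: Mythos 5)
Your proposal is correct and follows essentially the same route as the paper: the same contraction mapping argument on the ball $\{\|\phi\|_0\leq MA^{1/5}\}$, using the linear a-priori bound $CA^{-2/5}$ of Proposition \ref{prouniform} against the global remainder bound $\|\mathcal{E}\|_0\leq CA^{3/5}$ of Corollary \ref{coruapremainder}, with the same contraction constant of order $A^{-1/5}$. Your extra remark on uniqueness in the stated class is a harmless refinement of what Banach's fixed point theorem already gives in the paper.
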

\begin{proof}
Let us write (\ref{eqphi}) in the abstract form
\begin{equation}\label{eqLNE}
2\mathcal{L}(\phi)=\mathcal{N}(\phi)+\mathcal{E};\ \ \phi(\pm 1)=0,
\end{equation}
where $\mathcal{L}$ was studied in Section \ref{seclinear}, \[
\mathcal{N}(\phi)\equiv-\phi^2,\] and $\mathcal{E}$ was defined in
(\ref{equapremainder}).

For $M>0$, consider the closed ball of $C[-1,1]$ that is defined by
\[
\mathcal{B}_M=\left\{\phi \in C[-1,1]\ :\ \|\phi \|_0\leq M
A^\frac{1}{5} \right\}.
\]
We will show that, if $M$ is chosen sufficiently large, the mapping
$\mathcal{T}:\mathcal{B}_M\to C^2[-1,1]$, defined by
\[
\mathcal{L}\left(\mathcal{T}(\phi)
\right)=\mathcal{N}(\phi)+\mathcal{E};\ \ \mathcal{T}(\phi)(\pm
1)=0,
\]
(recall Proposition \ref{prouniform}), maps $\mathcal{B}_M$ into
itself and is a contraction with respect to the $\|\cdot\|_0$ norm,
provided that $A$ is sufficiently large. Let $\phi \in
\mathcal{B}_M$, via (\ref{eqremainderuapGlobal}) and
(\ref{eqapriori0}), we have
\[\begin{array}{lll}
    \|\mathcal{T}(\phi)\|_0 & \leq & CA^{-\frac{2}{5}}\left(\|\mathcal{N}(\phi)\|_0+\|\mathcal{E}\|_0
\right) \\
      &   &   \\
      & \leq &  CA^{-\frac{2}{5}}M^2
A^\frac{2}{5}+CA^{-\frac{2}{5}}A^\frac{3}{5}
 \\
      &   &   \\
     & \leq & CA^\frac{1}{5}(M^2A^{-\frac{1}{5}}+1),
  \end{array}
\]
where $C>0$ is independent of both large $A$ and $M$. By virtue of
the above relation, we can choose a large $M>0$ such that
$\mathcal{T}$ maps $\mathcal{B}_M$ into itself, for all sufficiently
large $A$. From now on, we fix such an $M$. Similarly, for $\phi_1,
\phi_2\in \mathcal{B}_M$, we have
\[
\|\mathcal{T}(\phi_1)-\mathcal{T}(\phi_2)\|_0\leq
CA^{-\frac{2}{5}}A^\frac{1}{5}\|\phi_1-\phi_0\|_0=CA^{-\frac{1}{5}}\|\phi_1-\phi_0\|_0,
\]
which implies that, for large $A$, the mapping
$\mathcal{T}:\mathcal{B}_M \to \mathcal{B}_M$ is a contraction.
Hence, by Banach's fixed point theorem, we infer that $\mathcal{T}$
has a unique fixed point in the closed set $\mathcal{B}_M$. In turn,
this furnishes a solution of (\ref{eqphi}) which satisfies the
uniform estimate (\ref{eqphi0}).

The proof of the proposition is complete.
\end{proof}

\begin{rem}\label{remeven}
If $u_{ap}$ is even, we can of course restrict ourselves to even
fluctuations $\phi$ in (\ref{eqansatz}).
\end{rem}

In the next two lemmas we will show that estimate (\ref{eqphi0})
can be improved away from the boundary points.

\begin{lem}\label{lemphi'}
Let $\phi$ be as in Proposition \ref{proexistPhi}. Given $L\geq 1$,
there exists a constant $C_L>0$ such that
\begin{equation}\label{eqphi'}
|\phi'(x)|\leq C_LA^\frac{2}{5},\ \ x\in
\left[-1,-1+(2A)^{-\frac{1}{5}}L\right]\bigcup\left[1-(2A)^{-\frac{1}{5}}L,1\right],
\end{equation}
for all $A$ sufficiently large.
\end{lem}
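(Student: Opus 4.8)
The plan is to pass to the boundary--layer scale, read off a pointwise bound for the second derivative of the rescaled fluctuation directly from equation (\ref{eqphi}), and then recover the first derivative by an elementary interpolation inequality on an interval whose length does not depend on $A$. I would carry this out near $x=-1$; the region near $x=1$ is treated identically after replacing $x+1$ by $1-x$. Recall that $\phi$ from Proposition \ref{proexistPhi} is of class $C^2$, since $u_{ap}$ is smooth and $\phi$ solves (\ref{eqphi}).

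First I would set $s=(2A)^{\frac{1}{5}}(x+1)$ and $\Phi(s)=\phi(x)$, so that $\phi'(x)=(2A)^{\frac{1}{5}}\Phi'(s)$ and $\phi''(x)=(2A)^{\frac{2}{5}}\Phi''(s)$. Rewriting (\ref{eqphi}) in the variable $s$ and dividing by $(2A)^{\frac{2}{5}}$ gives, on the interval $I_L\equiv[0,2L]$ (which for $A$ large corresponds to $x\in[-1,-1+2L(2A)^{-\frac{1}{5}}]\subset[-1,1]$),
\[
\Phi''=(2A)^{-\frac{2}{5}}u_{ap}\left(-1+(2A)^{-\frac{1}{5}}s\right)\Phi+\frac{1}{2}(2A)^{-\frac{2}{5}}\Phi^2-\frac{1}{2}(2A)^{-\frac{2}{5}}\mathcal{E}.
\]
Each term on the right is bounded by $C_LA^{\frac{1}{5}}$ on $I_L$: by (\ref{equapCloc-}) the coefficient $(2A)^{-\frac{2}{5}}u_{ap}$ is bounded on $I_L$ by a constant depending only on $L$ for $A$ large, while $\|\Phi\|_{L^\infty(I_L)}\le\|\phi\|_0\le CA^{\frac{1}{5}}$ by (\ref{eqphi0}); also $(2A)^{-\frac{2}{5}}\Phi^2=\mathcal{O}(1)$ by (\ref{eqphi0}); and $(2A)^{-\frac{2}{5}}|\mathcal{E}|\le CA^{-\frac{2}{5}}A^{\frac{3}{5}}=CA^{\frac{1}{5}}$ by Corollary \ref{coruapremainder}. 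Hence $\|\Phi''\|_{L^\infty(I_L)}\le C_LA^{\frac{1}{5}}$. By the mean value theorem there is $\xi_0\in(0,2L)$ with $|\Phi'(\xi_0)|=|\Phi(2L)-\Phi(0)|/(2L)\le L^{-1}\|\Phi\|_{L^\infty(I_L)}$, so that, integrating $\Phi''$, for every $s\in I_L$
\[
|\Phi'(s)|\le|\Phi'(\xi_0)|+|s-\xi_0|\,\|\Phi''\|_{L^\infty(I_L)}\le\frac{1}{L}\|\Phi\|_{L^\infty(I_L)}+2L\,\|\Phi''\|_{L^\infty(I_L)}\le C_LA^{\frac{1}{5}}.
\]
Multiplying by $(2A)^{\frac{1}{5}}$ yields $|\phi'(x)|\le C_LA^{\frac{2}{5}}$ for $0\le x+1\le L(2A)^{-\frac{1}{5}}$, and the symmetric argument near $x=1$ (using (\ref{equapCloc+}) and the variable $t=(2A)^{\frac{1}{5}}(1-x)$) completes the proof.

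No serious obstacle arises here; the only points to watch are the bookkeeping of powers of $A$ and the fixed length of the interpolation interval. The estimate works precisely because $\mathcal{E}$ is globally only $\mathcal{O}(A^{\frac{3}{5}})$ (Corollary \ref{coruapremainder}), so that $(2A)^{-\frac{2}{5}}\mathcal{E}$ is $\mathcal{O}(A^{\frac{1}{5}})$ like the other two terms rather than $\mathcal{O}(A^{\frac{2}{5}})$; and because $I_L$ has length $2L$ independent of $A$, passing from $\|\Phi\|_{L^\infty(I_L)}$ and $\|\Phi''\|_{L^\infty(I_L)}$ to $\|\Phi'\|_{L^\infty(I_L)}$ costs no inverse power of $A$. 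In particular, unlike the existence step, this lemma uses only the crude a priori bound (\ref{eqphi0}) and not the invertibility of $\mathcal{L}$.
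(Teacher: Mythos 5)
Your argument is correct and is essentially the paper's own proof: rescale to the boundary-layer variable, bound the second derivative of the rescaled fluctuation directly from (\ref{eqphi}) using (\ref{eqphi0}), (\ref{equapCloc-}) and Corollary \ref{coruapremainder}, then recover the first derivative by interpolation on an interval of fixed length. The only (immaterial) differences are that the paper also rescales the amplitude by $(2A)^{-\frac{1}{5}}$ so that all terms are $\mathcal{O}(1)$, and uses the standard interpolation inequality $\|\Psi'\|_{L^\infty}\leq 2\|\Psi\|_{L^\infty}+\|\Psi''\|_{L^\infty}$ instead of your mean-value-theorem variant.
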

\begin{proof}
Let
\begin{equation}\label{eqPsinDef}
\Psi(s)=(2A)^{-\frac{1}{5}}\phi\left(-1+(2A)^{-\frac{1}{5}}s
\right),\ \ s\in \left[0,2(2A)^\frac{1}{5} \right].
\end{equation}
From (\ref{eqphi}), we find that
\begin{equation}\label{eqPsinEq}
-\Psi''+(2A)^{-\frac{2}{5}}u_{ap}\left(-1+(2A)^{-\frac{1}{5}}s
\right)\Psi= -\frac{1}{2}(2A)^{-\frac{1}{5}}
\Psi^2+\frac{1}{2}(2A)^{-\frac{3}{5}}\mathcal{E}\left(-1+(2A)^{-\frac{1}{5}}s
\right),
\end{equation}
for $s\in \left(0,2(2A)^\frac{1}{5} \right)$, and $ \Psi(0)=0$.
Furthermore, from (\ref{eqphi0}), and (\ref{eqPsinDef}), it follows
that
\begin{equation}\label{eqPsiLinfty}
\|\Psi\|_{L^\infty\left(0,2(2A)^\frac{1}{5} \right)}\leq C.
\end{equation}
In turn,  relations (\ref{eqremainderuapGlobal}),
(\ref{equapCloc-}), (\ref{eqPsinEq}) and (\ref{eqPsiLinfty}) imply
that, given $L\geq 1$, there exists a constant $C_L>0$ such that
\[
\left|\Psi''(s) \right|\leq C_L \ \ \textrm{on}\ [0,L],
\]
provided that $A$ is sufficiently large. Consequently, it follows
from (\ref{eqPsiLinfty}), the above relation, and the elementary
interpolation inequality
\[
\|\Psi'\|_{L^\infty(0,L)}\leq 2
\|\Psi\|_{L^\infty(0,L)}+\|\Psi''\|_{L^\infty(0,L)}
\]
(keep in mind that $L\geq 1$), that
\[
\left|\Psi'(s) \right|\leq C_L \ \ \textrm{on}\ [0,L],
\]
provided that $A$ is sufficiently large (for some possibly larger
constant $C_L$). Now, the validity of estimate (\ref{eqphi'}) on the
interval $\left[-1,-1+(2A)^{-\frac{1}{5}}L\right]$ follows directly
via (\ref{eqPsinDef}). Analogously we can show its validity on the
interval $\left[1-(2A)^{-\frac{1}{5}}L,1\right]$.

The proof of the lemma is complete.
\end{proof}

\begin{lem}\label{lemdecay}
Let $\phi$ be as in Proposition \ref{proexistPhi}. There exist
positive constants $C,D$ such that
\begin{equation}\label{eqphidecay}
\left|\phi(x) \right|\leq C(1-x^2)^{-1},\ \ x\in
\left[-1+(2A)^{-\frac{1}{5}}D,1-(2A)^{-\frac{1}{5}}D \right],
\end{equation}
provided that $A$ is sufficiently large.
\end{lem}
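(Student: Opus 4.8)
The plan is to obtain (\ref{eqphidecay}) by a comparison (barrier) argument for $\mathcal{L}$ on the interval $J_A:=[-1+D(2A)^{-\frac{1}{5}},\,1-D(2A)^{-\frac{1}{5}}]$, exploiting crucially that there the potential $u_{ap}$ is \emph{large}: by (\ref{equaplower}), $u_{ap}\geq c\sqrt{A/2}\,\sqrt{1-x^2}>0$ on $J_A$ provided $D$ is a fixed, sufficiently large constant. Dividing (\ref{eqphi}) by $2$, the function $\phi$ solves $-\phi''+u_{ap}\phi=g$ in $(-1,1)$, where $g:=\frac{1}{2}(-\phi^2+\mathcal{E})$. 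I would first bound $g$ on $J_A$: by Proposition \ref{proexistPhi}, $\phi^2\leq\|\phi\|_0^2\leq CA^{\frac{2}{5}}\leq CA^{\frac{1}{2}}$, while (\ref{equapremainder}) gives $|\mathcal{E}|\leq CA^{\frac{1}{2}}(1-x^2)^{-\frac{1}{2}}$ on $J_A$ (on the part of $J_A$ where only the first alternative in (\ref{equapremainder}) is in force one has $1-x^2\leq C(2A)^{-\frac{1}{5}}$, so that $A(1-x^2)^2\leq CA^{\frac{1}{2}}(1-x^2)^{-\frac{1}{2}}$ there as well). Since also $A^{\frac{1}{2}}\leq A^{\frac{1}{2}}(1-x^2)^{-\frac{1}{2}}$ on $J_A$, we get
\[
|g(x)|\leq C_0\,A^{\frac{1}{2}}(1-x^2)^{-\frac{1}{2}},\qquad x\in J_A,
\]
with $C_0$ independent of $A$.

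For the supersolution I would take $\bar\phi:=K(1-x^2)^{-1}$ with $K>0$ a large constant to be fixed below. A direct computation gives $\bar\phi''=K\left(2(1-x^2)^{-2}+8x^2(1-x^2)^{-3}\right)$, and since $1-x^2\leq1$ this yields $-\bar\phi''\geq-10K(1-x^2)^{-3}$, so that
\[
-\bar\phi''+u_{ap}\bar\phi\ \geq\ Kc\sqrt{A/2}\,(1-x^2)^{-\frac{1}{2}}-10K(1-x^2)^{-3}\qquad\textrm{on}\ \ J_A.
\]
On $J_A$ we have $1-x^2\geq cD(2A)^{-\frac{1}{5}}$, hence $(1-x^2)^{-3}=(1-x^2)^{-\frac{1}{2}}(1-x^2)^{-\frac{5}{2}}\leq CD^{-\frac{5}{2}}A^{\frac{1}{2}}(1-x^2)^{-\frac{1}{2}}$; therefore, after enlarging $D$ (depending only on the constant $c$ of (\ref{equaplower})), the term $10K(1-x^2)^{-3}$ is $\leq\frac{1}{2}Kc\sqrt{A/2}\,(1-x^2)^{-\frac{1}{2}}$, and then, choosing $K$ large (depending on $C_0$ and $c$), one obtains $-\bar\phi''+u_{ap}\bar\phi\geq C_0A^{\frac{1}{2}}(1-x^2)^{-\frac{1}{2}}\geq|g|$ on $J_A$.

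It then remains to match at the endpoints $x=\pm(1-D(2A)^{-\frac{1}{5}})$, where $1-x^2\sim 2D(2A)^{-\frac{1}{5}}$ and hence $\bar\phi\geq cKD^{-1}A^{\frac{1}{5}}\geq\|\phi\|_0\geq|\phi|$ after one further enlargement of $K$ (using $\|\phi\|_0\leq CA^{\frac{1}{5}}$ from Proposition \ref{proexistPhi}). Since $-\bar\phi''+u_{ap}\bar\phi\geq|g|$ and $u_{ap}\geq0$ on $J_A$, the two functions $\bar\phi\mp\phi$ satisfy $-(\bar\phi\mp\phi)''+u_{ap}(\bar\phi\mp\phi)\geq|g|\mp g\geq0$ in $J_A$ and are nonnegative at the two endpoints; the weak maximum principle (see e.g. \cite{walter}) then gives $\bar\phi\mp\phi\geq0$ in $J_A$, i.e. $|\phi|\leq\bar\phi=K(1-x^2)^{-1}$ on $J_A$, which is exactly (\ref{eqphidecay}) with $C=K$.

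I expect the only point requiring care to be the order in which the constants are fixed in the supersolution step: $D$ has to be enlarged enough (depending on $c$) for the curvature term $10K(1-x^2)^{-3}$ of $\bar\phi$ to be absorbed by $u_{ap}\bar\phi$ near the ends of $J_A$, and this enlargement of $D$ must be checked to be compatible with the requirements on $D$ in (\ref{equaplower}) and (\ref{equapremainder}) --- it is, since all of them merely ask $D$ to be sufficiently large (and no circularity arises, as $K$ is chosen only after $D$ and $C_0$). Note also that both the decay of $\mathcal{E}$ and the large size of $u_{ap}$ are genuinely used: the crude global bound $\|\phi\|_0\leq CA^{\frac{1}{5}}$ already forces the barrier to be of size $\sim A^{\frac{1}{5}}$ at the endpoints, i.e. of the form $(1-x^2)^{-1}$, and it is precisely the $(1-x^2)^{-\frac{1}{2}}$ decay of $g$ (inherited from $\mathcal{E}$), together with the lower bound $u_{ap}\geq c\sqrt{A/2}\,\sqrt{1-x^2}$, that makes such a barrier a supersolution. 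Everything else is routine.
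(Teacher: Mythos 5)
Your proposal is correct and follows essentially the same route as the paper: a barrier of the form $K(1-x^2)^{-1}$ on $\left[-1+(2A)^{-\frac{1}{5}}D,\,1-(2A)^{-\frac{1}{5}}D\right]$, combined with the lower bound (\ref{equaplower}) on $u_{ap}$, the error estimate (\ref{equapremainder}), the crude bound (\ref{eqphi0}) at the endpoints, and the maximum principle, with the constants fixed in the same order ($D$ first, then $K$). The only cosmetic difference is that you absorb the quadratic term $\phi^2$ into the right-hand side via (\ref{eqphi0}) and run a purely linear comparison, whereas the paper verifies the lower/upper solution inequality for the nonlinear equation (\ref{eqphi}) directly (also invoking (\ref{eqphi0})); the two verifications are interchangeable.
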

\begin{proof}
Let
\[
\underline{\psi}(x)=-K(1-x^2)^{-1},\ \ x\in
\left[-1+(2A)^{-\frac{1}{5}}D,1-(2A)^{-\frac{1}{5}}D \right],
\]
with constant $K>0$ to be determined, and $D>0$ to be chosen larger
than that  in (\ref{equaplower}), such that $\underline{\psi}$ is a
lower solution to (\ref{eqphi}) on the above interval.
Differentiating twice gives us
\[
\underline{\psi}''=-K(6x^2+2)(1-x^2)^{-3}\geq -8K(1-x^2)^{-3}.
\]
Recalling (\ref{equapremainder}), (\ref{equaplower}), and
(\ref{eqphi0}), we find that
\[\begin{array}{lll}
    -2\underline{\psi}''+2u_{ap}\underline{\psi}+\underline{\psi}^2-\mathcal{E} & \leq & 16K(1-x^2)^{-3}-cA^\frac{1}{2}K(1-x^2)^{-\frac{1}{2}}+CA^\frac{2}{5}\\ & &+CA^\frac{1}{2}(1-x^2)^{-\frac{1}{2}}
 \\
      &   &   \\
      & \leq & (1-x^2)^{-\frac{1}{2}}\left[CK(1-x^2)^{-\frac{5}{2}}-cKA^\frac{1}{2}+CA^\frac{1}{2}
\right] \\
      &   &   \\
      & \leq & (1-x^2)^{-\frac{1}{2}}\left[CKD^{-\frac{5}{2}}A^\frac{1}{2}-cKA^\frac{1}{2}+CA^\frac{1}{2}
\right]
 \\
      &   &   \\
      & \leq & (1-x^2)^{-\frac{1}{2}}\left[-\frac{1}{2}cKA^\frac{1}{2}+CA^\frac{1}{2}
\right], \\
      \end{array}
\]
where the constants $c,C$ are independent of both $A$ and $D$,
having increased the value of $D$ if necessary. Hence, we can chose
a large $K>0$ such that
\[
-2\underline{\psi}''+2u_{ap}\underline{\psi}+\underline{\psi}^2-\mathcal{E}\leq
0,\ \ x\in \left[-1+(2A)^{-\frac{1}{5}}D,1-(2A)^{-\frac{1}{5}}D
\right],
\]
provided that $A$ is sufficiently large.  By virtue of
(\ref{eqphi}), (\ref{eqphi0}), the above equation, and making use of
the maximum principle, we deduce that
\[
-C(1-x^2)^{-1}\leq \phi(x),\ \ x\in
\left[-1+(2A)^{-\frac{1}{5}}D,1-(2A)^{-\frac{1}{5}}D \right],
\]
for some large constant $C>0$ and all large $A$. Analogously we can
establish the other side of the desired estimate (\ref{eqphidecay}).

The proof of the lemma is complete.
\end{proof}

In summary, we have the following.
\begin{pro}\label{prosummuap}
There exists a solution of (\ref{eqEq}) such that
\[\begin{array}{ll}
    u-u_{ap}=\mathcal{O}(A^\frac{2}{5})(1-x^2), & x\in \left[-1,-1+(2A)^{-\frac{1}{5}}D\right]\bigcup
\left[1-(2A)^{-\frac{1}{5}}D,1\right], \\
     &  \\
    u-u_{ap}=\mathcal{O}(1)(1-x^2)^{-1}, & x\in \left[-1+(2A)^{-\frac{1}{5}}D,1-(2A)^{-\frac{1}{5}}D\right],
  \end{array}
\]
for some constant $D\gg 1$, uniformly as $A\to \infty$.
\end{pro}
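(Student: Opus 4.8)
The plan is to assemble the three results already obtained in this section. I would take $u = u_{ap} + \phi$, where $\phi$ is the classical solution of (\ref{eqphi}) furnished by Proposition \ref{proexistPhi}; by the ansatz (\ref{eqansatz}) this $u$ automatically solves (\ref{eqEq}), so the only task left is to estimate $\phi = u - u_{ap}$ separately in the interior and in the two boundary strips. Throughout, $D$ will be taken larger than the constant appearing in Lemma \ref{lemdecay} and we will invoke Lemma \ref{lemphi'} with $L = D$.

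For $x$ in the interior interval $\left[-1+(2A)^{-\frac{1}{5}}D,\,1-(2A)^{-\frac{1}{5}}D\right]$, the desired bound $u - u_{ap} = \mathcal{O}(1)(1-x^2)^{-1}$ is nothing but the conclusion of Lemma \ref{lemdecay}, so there is nothing more to do in this regime.

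For $x$ in the left boundary strip $\left[-1,\,-1+(2A)^{-\frac{1}{5}}D\right]$, I would use Lemma \ref{lemphi'} (with $L=D$) to get $|\phi'(x)| \leq C_D A^{\frac{2}{5}}$ there. Since $\phi$ is a $C^2$ solution with $\phi(-1)=0$, integrating yields
\[
|\phi(x)| = \left|\int_{-1}^{x}\phi'(\xi)\,d\xi\right| \leq C_D A^{\frac{2}{5}}(x+1).
\]
On this strip $1-x \geq 1$ for $A$ large, hence $x+1 = \frac{1-x^2}{1-x} \leq 1-x^2$, and therefore $|\phi(x)| \leq C A^{\frac{2}{5}}(1-x^2)$, i.e. $u - u_{ap} = \mathcal{O}(A^{\frac{2}{5}})(1-x^2)$. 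The right boundary strip $\left[1-(2A)^{-\frac{1}{5}}D,\,1\right]$ is handled identically, using the other half of Lemma \ref{lemphi'}, the condition $\phi(1)=0$, and the elementary inequality $1-x \leq 1-x^2$ valid there for large $A$. Combining the interior and the two boundary estimates gives the proposition.

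Since every nontrivial ingredient — the fixed-point construction of $\phi$, the rescaled gradient bound near the endpoints, and the barrier argument for the algebraic interior decay — has already been established, there is no genuine obstacle in this proof; the only point needing a little care is the bookkeeping of choosing a single $D \gg 1$ that is simultaneously admissible in Lemmas \ref{lemphi'} and \ref{lemdecay} and large enough to make the elementary comparisons $x+1 \leq 1-x^2$ (near $x=-1$) and $1-x \leq 1-x^2$ (near $x=1$) hold throughout the respective boundary strips.
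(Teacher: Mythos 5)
Your proposal is correct and is essentially the paper's own argument: Proposition \ref{prosummuap} is stated there as a summary of Proposition \ref{proexistPhi}, Lemma \ref{lemphi'} and Lemma \ref{lemdecay}, with the boundary-strip bound obtained exactly as you do, by combining $\phi(\pm 1)=0$ with the derivative bound of Lemma \ref{lemphi'} and the comparability of $1\mp x$ with $1-x^2$ near $x=\mp 1$. No gaps.
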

\section{Proof of the main result}\label{secmainResult}
From Propositions \ref{prouout} and \ref{prosummuap}, relation
(\ref{equap}), Corollary \ref{coruap-sqrt} and Remark
\ref{remlinearnearby}, we can infer the validity of  Theorem
\ref{thmex}.

\appendix
\section{Uniqueness and non-degeneracy of solutions for problem (\ref{eqeta})}\label{appenFelmer}
\begin{pro}\label{profelmer}
Problem (\ref{eqeta}) has $Y_+-Y_-$ as its unique solution.
Moreover, this solution is non-degenerate, namely there are no
nontrivial bounded solutions to $(\ref{eqnondegYpm})_-$.
\end{pro}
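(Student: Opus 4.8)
The plan is to transplant the uniqueness and non-degeneracy machinery of \cite{felmerUniqueness} for (\ref{eqfelmerGener}) to the degenerate instance (\ref{eqeta}), in which $a=0$, $p=2$, $\nu=0$, $V=Y_+$ and $U(s)=s^{3}Y_+'(s)$. Before doing that I would record the two qualitative facts about an arbitrary solution $u$ of (\ref{eqeta}) that must replace the hypotheses of \cite{felmerUniqueness} which fail in our setting. First, near $s=0$ the equation is regular (both $Y_+$ and the nonlinearity are smooth and $Y_+(0)=0$), so $u(s)=u'(0)s+\mathcal{O}(s^{2})$, and $u'(0)>0$ is forced by $u>0$ on $(0,\infty)$; for $u=Y_+-Y_-$ this is $u'(0)=Y_+'(0)-Y_-'(0)>0$ by Proposition \ref{proYpm}. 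This takes over the role of the hypothesis $a>0$. Second, since $Y_+(s)\geq c\sqrt{s}$ for $s$ large, a WKB/super-solution comparison shows that $u$ and all of its derivatives decay faster than any power of $s$ --- in fact like $\exp(-\tfrac{4}{5}s^{5/4})$ --- as $s\to\infty$; this more than compensates for $V=Y_+$ being unbounded rather than bounded below by a positive constant, and it makes every weighted boundary term at $+\infty$ in the identities below vanish.

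For uniqueness I would run the shooting/variation argument of \cite{felmerUniqueness}. Parametrise candidate solutions by $\gamma=u'(0)>0$, let $u_\gamma$ solve the initial value problem with $u_\gamma(0)=0$, $u_\gamma'(0)=\gamma$, and let $\phi_\gamma=\partial_\gamma u_\gamma$, which solves the linearised equation
\[
\phi''-Y_+(s)\phi+u_\gamma(s)\phi=0,\qquad \phi(0)=0,\quad \phi'(0)=1 .
\]
The heart of \cite{felmerUniqueness} is a comparison of $\phi_\gamma$ with a suitable multiple of the dilation field $s\,u_\gamma'$ through an auxiliary Wronskian-type functional whose monotonicity is governed by the sign of $U(s)=s^{3}Y_+'(s)$; since $Y_+'>0$ on $[0,\infty)$ (Proposition \ref{proYpm}) we are in the favourable case (i), and the conclusion is that, whenever $u_\gamma$ is a global positive solution decaying at infinity, $\phi_\gamma$ has at most one zero in $(0,\infty)$. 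I would check that this derivation survives the two degeneracies: the origin is harmless because $U(0)=0$ and the weights entering the functional vanish there at least as fast as in \cite{felmerUniqueness} (using $u_\gamma\sim\gamma s$ and $Y_+\sim Y_+'(0)s$), and $s=\infty$ is harmless by the super-exponential decay recorded above. With the ``at most one zero'' property in hand, the classical Coffman--Kwong type topological argument (monotone dependence of the terminal behaviour of $u_\gamma$ on $\gamma$ together with the zero count for $\phi_\gamma$) rules out two distinct admissible parameters, so (\ref{eqeta}) has a unique solution; combined with the reduction in the proof of Proposition \ref{proYpm}, this is exactly the statement that $Y_+$ and $Y_-$ are the only solutions of (\ref{eqpainleve}).

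For non-degeneracy I would use the standard principle that, in this class of problems, uniqueness together with the same auxiliary functional yields non-degeneracy. A bounded solution $\psi$ of $(\ref{eqnondegYpm})_-$, i.e. of $\psi''-Y_-\psi=0$ with $\psi(0)=0$, is --- via the identity $Y_-=Y_+-u$ with $u=Y_+-Y_-$ --- precisely a solution of the linearisation of (\ref{eqeta}) at its unique solution $u$, so $\psi=c\,\phi_{\gamma_0}$ at the admissible parameter $\gamma_0$; since $Y_-(s)\to\infty$, such a $\psi$ decays super-exponentially. I would exclude this by integrating over $(0,\infty)$ the Pohozaev-type identity
\[
\frac{d}{ds}\bigl(u'\phi_{\gamma_0}-u\,\phi_{\gamma_0}'\bigr)=\tfrac{1}{2}\,u^{2}\phi_{\gamma_0},
\]
all boundary terms vanishing (at $0$ because $u(0)=\phi_{\gamma_0}(0)=0$, at $\infty$ because $u$ decays and $\phi_{\gamma_0}$ was assumed to decay), which forces $\int_{0}^{\infty}u^{2}\phi_{\gamma_0}\,ds=0$; combined with the refined sign/location information on the single possible zero of $\phi_{\gamma_0}$ extracted from the auxiliary functional --- exactly as in the non-degeneracy part of \cite{felmerUniqueness} --- this is a contradiction. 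Hence $\phi_{\gamma_0}$ is unbounded, $0$ is not in the kernel of the linearisation, and $Y_-$ is non-degenerate. The delicate point throughout, and the one I would spend the most effort on, is the bookkeeping of the auxiliary functional of \cite{felmerUniqueness} across $s=0$, where $V=Y_+$ vanishes while the weight $s^{3}$ in $U$ degenerates simultaneously; everything else is either a direct quotation of \cite{felmerUniqueness} or a routine consequence of the asymptotics (\ref{eqY-sqrt})--(\ref{eqY'}) and the monotonicity in Proposition \ref{proYpm}.
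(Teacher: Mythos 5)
Your reduction of the kernel of $(\ref{eqnondegYpm})_-$ to the linearisation of (\ref{eqeta}) at $u=Y_+-Y_-$, the Wronskian identity $\frac{d}{ds}(u'\phi-u\phi')=\tfrac12 u^2\phi$, and the super-exponential decay statements are all correct, but the argument has a genuine gap at the decisive step of the non-degeneracy part. From the integrated identity you only get $\int_0^\infty u^2\phi\,ds=0$, and from the (asserted) zero-counting lemma only that $\phi$ changes sign at most once; these two facts are perfectly compatible --- a function with exactly one sign change can certainly be $L^2(u^2\,ds)$-orthogonal to $1$ --- so the ``refined sign/location information'' you invoke is not a routine quotation but is precisely the missing proof. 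Moreover you cannot borrow it from \cite{felmerUniqueness}: there the non-degeneracy is not obtained by such an ODE identity but by a variational Morse-index argument which requires $\nu>0$, and the present problem has $\nu=0$ (the paper states this obstruction explicitly). The paper's actual proof circumvents this by a different mechanism: $\varphi=Y_+-Y_-$ is a mountain pass of $I$, Hofer's theorem gives $i(I,\varphi)\le 1$, and one introduces the perturbed functional $J_\delta$ whose Euler--Lagrange problem (\ref{eqNLSdelta}) is shown to have $\varphi$ as its \emph{only} solution for small $\delta$ --- the key new ingredient being the inequality $2Y_+'(s)+\delta\varphi'(s)>0$, available here because $Y_+'$ decays only algebraically (by (\ref{eqY'})) while $\varphi'$ decays exponentially; degeneracy would force $i(J_\delta,\varphi)\ge 2$, contradicting Hofer again. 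None of this structure appears in your proposal, and without it (or a genuinely new ODE argument replacing it) the non-degeneracy claim is unproved.

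The uniqueness half also differs from the paper and leans on unverified steps. You delegate the central lemma (``$\partial_\gamma u_\gamma$ has at most one zero because $U(s)=s^3Y_+'(s)\ge 0$'') to \cite{felmerUniqueness} and promise to ``check that the derivation survives'' the degeneracies at $s=0$ (where $V=Y_+$ vanishes, instead of $a>0$ and $\liminf U>0$ in the quoted theorem) and at $s=\infty$ (where $V$ is unbounded); that check is the substance of the matter, not bookkeeping. The paper instead gives a short self-contained comparison of two putative solutions: choose them intersecting at most once so that $u_1/u_2$ is monotone, show the energy $E(s;u)=(u')^2-Y_+u^2+\tfrac13u^3$ satisfies $E'=-Y_+'u^2<0$ hence $E>0$ by the decay at infinity, and then the Yotsutani-type functional $F=E(\cdot;u_2)-(u_2/u_1)^2E(\cdot;u_1)$ is strictly increasing with $F(0)=\lim_{s\to\infty}F(s)=0$, a contradiction. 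If you wish to keep a shooting/zero-counting route you would have to prove the zero-counting lemma from scratch in this degenerate setting; as written, both halves of your argument rest on steps that the cited reference does not supply.
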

\begin{proof}
To show uniqueness, we argue by contradiction and assume that there
exist two distinct solutions $u_1$ and $u_2$ of (\ref{eqeta}). As in
\cite{byeonOshita,felmerUniqueness,tanaka}, the solutions $u_1$ and
$u_2$ can be chosen such that $u_1'(0)<u_2'(0)$ and such that they
intersect \emph{at most once} in $(0,\infty)$ (this is achieved by a
shooting argument, making use only of the smooth dependence on the
initial data of solutions to the ordinary differential equation in
(\ref{eqeta})). Under this assumption, as in
\cite{byeonOshita,felmerUniqueness,tanaka}, we have that
\begin{equation}\label{equ1overu2}
\frac{d}{ds}\left(\frac{u_1(s)}{u_2(s)} \right)>0,\ \ s>0.
\end{equation}
We point out that, in the above calculation, the terms involving
$Y_+$ cancel each other, and thus the form of $Y_+$ is irrelevant
for this part of the proof. Furthermore, if we define
\[
E(s;u)=\left[u'(s)\right]^2-Y_+(s)u^2(s)+\frac{1}{3}u^3(s),\ \ s>0,\
u\in C^2\left([0,\infty) \right),
\]
a direct calculation yields that
\begin{equation}\label{eqE'}
\frac{d}{ds}E(s;u_i)=-Y_+'u_i^2<0,\ \ s>0,\ \ i=1,2.
\end{equation}
Therefore, using the standard fact that any solution of
(\ref{eqeta}) decays super-exponentially as $s\to \infty$, we obtain
that
\begin{equation}\label{eqEpositive}
E(s;u_i)>\lim_{s\to \infty}E(s;u_i)=0,\ \ s>0,\ \ i=1,2.
\end{equation}
Next, as in \cite{yotsutani}, we set
\[
F(s)=E(s;u_2)-\left(\frac{u_2}{u_1} \right)^2E(s;u_1),\ \ s>0.
\]
Note that, thanks to l'hospital's rule, we have $F(0)=0$. A direct
calculation, making use of (\ref{eqE'}), yields that
\[
F'(s)=-\frac{d}{ds}\left\{\left(\frac{u_2}{u_1} \right)^2
\right\}E(s;u_1),\ \ s>0.
\]
 So, in view of (\ref{equ1overu2}) and
(\ref{eqEpositive}), we get that
\[
F'(s)>0,\ \ s>0.
\]
Consequently, noting that (\ref{equ1overu2}) implies that
\[
0<\frac{u_2(s)}{u_1(s)}<\frac{u_2'(0)}{u_1'(0)},\ \ s>0,
\]
and making once more use of the super-exponential decay of $u_1$ and
$u_2$, we arrive at the \emph{strict} inequality
\[
0=F(0)<\lim_{s\to \infty}F(s)=0,
\]
which is a contradiction. Hence, problem (\ref{eqeta}) has $Y_+
-Y_-$ as its only solution.

With some care, the non-degeneracy of $\varphi$ can also be shown as
in \cite{felmerUniqueness} (see also \cite{byeonOshita},
\cite{tanaka}). The fact that $V(s)=Y_+(s)\to \infty$, as $s\to
\infty$, poses an obstruction in adapting some proofs of
\cite{felmerUniqueness} to our setting (especially the second part
of the proof of Proposition 3.1 therein). Nevertheless, the fact
that $Y_+'$ is positive on $[0,\infty)$ and decays to zero at an
algebraic rate, see (\ref{eqY'}), will allow us to bypass some of
the arguments in \cite{felmerUniqueness}, and in fact provide a more
direct proof as follows. Firstly, motivated from \cite{byeonOshita},
we define
\[
\|\phi\|=\left(\int_{0}^{\infty}\left[(\phi')^2+Y_+(s)\phi^2
\right]ds \right)^\frac{1}{2},
\]
and let $X$ be the completion of $C_0^\infty(0,\infty)$ with respect
to $\|\cdot\|$. We note that
\[
\|\phi\|^2\geq \mu_1\int_{0}^{\infty}\phi^2ds,
\]
where $\mu_1>0$ is the principal eigenvalue of
\[
-\psi''+Y_+(s)\psi=\mu\psi,\ \ s>0,\ \ \psi(0)=0,\ \psi\in
L^2(\mathbb{R}).
\]
Let $\varphi=Y_+ -Y_-$ be the unique solution of (\ref{eqeta}), then
$\varphi$ is a critical point of the functional
\[
I(u)=\int_{0}^{\infty}\left(|u'|^2+Y_+(s)u^2-\frac{1}{3}u_+^3
\right)ds,
\]
where $I:X\to \mathbb{R}$ is of class $C^2$ (here
$u_+=\max\{u,0\}$). This functional has the mountain pass structure
(see for instance \cite{malchiodicambridge}), and the unique
solution $\varphi$ of (\ref{eqeta}) corresponds to a mountain pass
solution. We point out that, even though the interval $(0,\infty)$
is infinite, compactness is restored by the property that $Y_+(s)\to
\infty$ as $s\to \infty$ (see \cite{rabinowitz}). We define the
Morse index of $\varphi$ as
\[\begin{split}
i(I,\varphi)=\max\left\{\textrm{dim}H \ :\ H\subset X\ \textrm{is\
a\ subspace\ such\ that}\right. \\ \left. I''(\varphi)(h,h)<0\
\textrm{for\ all}\ h\in H\backslash \{0\} \right\}. \end{split}\]
It follows from the general theorem in \cite{hofer} that
\begin{equation}\label{eqhofer}i(I,\varphi)\leq 1.\end{equation} In fact, for the specific equation, this can be shown in an elementary way (see \cite{berestWei}). As in
\cite{byeonOshita,felmerUniqueness,tanaka}, we introduce a
perturbed functional
\[
J_\delta(u)=I(u)-\delta\int_{0}^{\infty}\left(\frac{1}{3}u_+^3-\frac{1}{2}\varphi(s)u^2
\right)ds,\ \ u\in X,
\]
for small $\delta>0$. By the maximum principle, we see that
non-trivial critical points of $J_\delta$ are solutions to the
problem
\begin{equation}\label{eqNLSdelta}\left\{\begin{array}{l}
                                    2u''-\left(2Y_+(s)+\delta\varphi(s) \right)u+(1+\delta)u^2=0,\ \
s>0,\ \ u(s)>0,\ s>0,
 \\
                                      \\
                                    u(0)=0,\ \ \lim_{s\to \infty}u(s)=0.
                                  \end{array}\right.
\end{equation}
Observe that $\varphi$ is a solution of (\ref{eqNLSdelta}) for all
$\delta>0$. As in \cite{felmerUniqueness}, our primary objective is
to apply the arguments that were used for showing uniqueness for
(\ref{eqeta}) in order to infer that $\varphi$ is the only solution
of (\ref{eqNLSdelta}) if $\delta>0$ is sufficiently small. These
arguments can be applied almost word for word to (\ref{eqNLSdelta}),
once we show that the corresponding relation to (\ref{eqE'}) holds.
In other words, we have to show that
\begin{equation}\label{eqfelmerposi}
2Y_+'(s)+\delta \varphi'(s)>0,\ \ s>0,
\end{equation}
for sufficiently small $\delta>0$ (under the assumptions of
\cite{felmerUniqueness}, recall our discussion following
(\ref{eqfelmerGener}), this was not possible and the authors had to
argue indirectly). To this end, note that we have the following
rough estimates:
\[
Y_+'(s)\geq \min\left\{c,\frac{1}{4}s^{-\frac{1}{2}} \right\},\ \
\left|\varphi'(s) \right|\leq Ce^{-s},\ \ s\geq0,
\]
for some positive constants $c,C$ (the former estimate holds via
(\ref{eqY'}), while the latter from the super-exponential decay of
$\varphi$ and (\ref{eqeta})). We therefore deduce that
(\ref{eqfelmerposi}) is valid if $\delta\in (0,m)$, where $m>0$ is
the minimum value of the function
\[
2C^{-1}\min\left\{c,\frac{1}{4}s^{-\frac{1}{2}} \right\}e^s,\ \
s\geq 0.
\]
Consequently, if $\delta>0$ is sufficiently small, the function
$\varphi=Y_+-Y_-$ is the only solution to (\ref{eqNLSdelta}).

As in \cite{byeonOshita,felmerUniqueness,tanaka}, in order to show
that the unique solution $\varphi$ of (\ref{eqeta}) is
non-degenerate, we will argue by contradiction. So, assume that
$\varphi$ is degenerate. In view of (\ref{eqhofer}), this implies
that there exists a $2$-dimensional subspace $H\subset X$ such that
\[
I''(\varphi)(h,h)\leq 0\ \ \textrm{for\ all}\ h\in H.
\]
Since
\[
J_\delta''(u)(h,h)=I''(u)(h,h)-\delta\int_{0}^{\infty}\left(2u_+-\varphi(s)
\right)h^2ds,
\]
for any $u\in X,\ h\in H$, we have
\[
J''_\delta(\varphi)(h,h)=I''(\varphi)(h,h)-\delta\int_{0}^{\infty}\varphi
h^2ds.
\]
In particular, we see that
\[
J''_\delta(\varphi)(h,h)<0\ \ \textrm{for\ all}\ h\in H\backslash
\{0\},
\]
which implies that $i(J_\delta,\varphi)\geq 2$. On the other hand,
since $J_\delta$ has the mountain pass structure, and
(\ref{eqNLSdelta}) has $\varphi$ as its only solution for small
$\delta>0$, we must have $i(J_\delta,\varphi)\leq 1$ for small
$\delta>0$. We have therefore arrived at a contradiction, thus
completing the proof of the non-degeneracy of $\varphi$.

The proof of the proposition is complete.
\end{proof}

  \textbf{Acknowledgment.} The research leading
to these results has received funding from the European Union's
Seventh Framework Programme $(\textrm{FP7-REGPOT}-2009-1)$ under
grant agreement $\textrm{n}^\textrm{o}$ 245749 and by the ARISTEIA
(Excellence) programme ``Analysis of discrete, kinetic and
continuum models for elastic and viscoelastic response'' of the
Greek Secretariat of Research.

\end{document}